\documentclass[11pt, reqno]{amsart}
\usepackage{mathrsfs}
\usepackage[active]{srcltx}
\usepackage{mathrsfs,amsmath}
\usepackage{mathtools}
\usepackage{longtable}
\usepackage{todonotes}
\usepackage{amssymb}
\usepackage{cite}
\allowdisplaybreaks

\usepackage{xcolor}
\definecolor{bluecite}{HTML}{0875b7}

\usepackage[unicode=true,
bookmarksopen={true},
pdffitwindow=true,
colorlinks=true,
linkcolor=bluecite,
citecolor=bluecite,
urlcolor=bluecite,
hyperfootnotes=false,
pdfstartview={FitH},
pdfpagemode= UseNone]{hyperref}


\newcommand{\ler}[1]{\left(#1\right)}
\newcommand{\lers}[1]{\left\{#1\right\}}

\newcommand{\dd}{\mathrm{d}}

\newcommand{\cW}{\mathcal{W}}
\newcommand{\R}{\mathbb{R}}
\newcommand{\HH}{\mathbb{H}}
\newcommand{\wth}{\cW_p(\HH^n)}
\newcommand{\be}{\begin{equation}}
\newcommand{\ee}{\end{equation}}

\newcommand{\abs}[1]{\left|#1\right|}
\newcommand{\norm}[1]{\left|\left|#1\right|\right|}


\newcommand{\xt}{\widetilde{x}}
\newcommand{\yt}{\widetilde{y}}
\newcommand{\zt}{\widetilde{z}}

\newcommand{\xtyt}{(\widetilde{x},\widetilde{y})}
\newcommand{\Wphn}{\mathcal{W}_p(\HH^n)}

\newcommand{\dwp}{d_{\mathcal{W}_p}}

\newcommand{\Wpl}{\mathcal{W}_p(L)}

\newcommand{\supp}{\mathrm{supp}}

\newcommand{\isom}{\mathrm{Isom}}
\newcommand{\bigO}{\underline{\underline{O}}}

\usepackage[left=3.9cm,right=3.9cm,top=3.9cm,bottom=3.9cm]{geometry}

\newtheorem{theorem}{Theorem}[section]
\newtheorem{proposition}[theorem]{Proposition}
\newtheorem{lemma}[theorem]{Lemma}
\newtheorem{corollary}[theorem]{Corollary}

\newtheorem{remark}[theorem]{Remark}
\numberwithin{equation}{section}

\usepackage{bbm}  
\usepackage{dsfont}

\subjclass[]{ 
        46E27 
	49Q22 
        54E40 
}
\keywords{Isometries, isometric embeddings, Wasserstein Space, Heisenberg group}

\title[Isometries and isometric embeddings of $\Wphn$]{Isometries and isometric embeddings of Wasserstein spaces over the Heisenberg group}

\author[Zolt\'an M. Balogh]{Zolt\'an M. Balogh}
\address{Zolt\'an M. Balogh, Universit\"at Bern\\ Mathematisches Institut (MAI)\\ Sidlerstrasse 12\\ 3012 Bern\\ Schweiz}
\email{zoltan.balogh@unibe.ch}

\author[Tam\'as Titkos]{Tam\'as Titkos}
\address{Tam\'as Titkos, HUN-REN Alfr\'ed R\'enyi Institute of Mathematics\\ Re\'altanoda u. 13-15.\\
Budapest H-1053\\ Hungary\\ and Corvinus University of Budapest\\ Fővám tér 13-15.\\
Budapest H-1093\\ Hungary}
\email{titkos.tamas@renyi.hu}

\author[D\'aniel Virosztek]{D\'aniel Virosztek}
\address{D\'aniel Virosztek, HUN-REN Alfr\'ed R\'enyi Institute of Mathematics\\ Re\'altanoda u. 13-15.\\
Budapest H-1053\\ Hungary}
\email{virosztek.daniel@renyi.hu}

\thanks{Z. M. Balogh is supported by the Swiss National Science Foundation, Grant Nr. {200020\_191978}.  }
\thanks{T. Titkos is supported by the Hungarian National Research, Development and Innovation Office - NKFIH grant no. K134944) and by the Momentum program of the Hungarian Academy of Sciences under grant agreement no. LP2021-15/2021.}
\thanks{D. Virosztek is supported by the Momentum program of the Hungarian Academy of Sciences under grant agreement no. LP2021-15/2021, by the Hungarian Research, Development and Innovation Office (NKFIH) under grant agreement no. Excellence\_151232, and partially supported by the ERC Synergy Grant No. 810115.}

\begin{document}

\begin{abstract}
Our purpose in this paper is to study isometries and isometric embeddings of the $p$-Wasser\-stein space $\Wphn$ over the Heisenberg group $\HH^n$ for all $p>1$ and for all $n\geq1$. First, we create a link between optimal transport maps in the Euclidean space $\mathbb{R}^{2n}$ and the Heisenberg group $\HH^n$. Then we use this link to understand isometric embeddings of $\mathbb{R}$ and $\mathbb{R}_+$ into $\Wphn$ for $p>1$. That is, we characterize complete geodesics and geodesic rays in the Wasserstein space. Using these results we determine the metric rank of $\Wphn$. Namely, we show that $\mathbb{R}^k$ can be embedded isometrically into $\Wphn$ for $p>1$ if and only if $k\leq n$. As a consequence, we conclude that $\mathcal{W}_p(\mathbb{R}^k)$ and $\mathcal{W}_p(\HH^k)$ can be embedded isometrically into $\Wphn$ if and only if $k\leq n$. In the second part of the paper, we study the isometry group of $\Wphn$ for $p>1$. We find that these spaces are all isometrically rigid, meaning that for every isometry $\Phi:\Wphn\to\Wphn$ there exists an isometry $\psi:\HH^n\to\HH^n$ such that $\Phi=\psi_{\#}$.
\end{abstract}

\maketitle

\tableofcontents

\section{Introduction: motivation and main results}\label{s1:intro}

 In recent decades, there has been rapid development in the theory of optimal mass transportation and its countless applications. 
 The original transport problem initiated by Monge is to find the cheapest way to transform one probability distribution into another when the cost of transporting mass is proportional to the distance. Probably the most important metric structure which is related to optimal mass transportation is the so-called $p$-Wasserstein space $\mathcal{W}_p(X)$, where the underlying space $X$ is a complete and separable metric space (see the precise definition later). 
 
 Various connections between the geometry of the underlying space $X$ and the geometry of the Wasserstein space $\mathcal{W}_2(X)$ have been investigated by Lott and Villani in the groundbreaking paper \cite{LV}. The pioneering work of Kloeckner \cite{K} and the follow-up papers \cite{BK,BK2, K2} started to explore fundamental geometric features of $2$-Wasserstein spaces, including the description of complete geodesics and geodesic rays, determining their different type of ranks, and understanding the structure of their isometry group $\isom(\mathcal{W}_2(X))$. It is a general phenomenon that the group of isometries contains elements that are closely related to certain morphisms of the underlying structure, see e.g. \cite{DKM,dolinar-molnar,kuiper,isemb-jmaa,geher-titkos,KT,molnar-levy}. In the case of $p$-Wasserstein spaces, the isometry group $\isom(X)$ of the underlying space $X$ is always isomorphic to a subgroup of $\isom\big(\mathcal{W}_p(X)\big)$. In fact, in the typical case, $\isom(X)$ and $\isom\big(\mathcal{W}_p(X)\big)$ are isomorphic. In such a case we call the Wasserstein space isometrically rigid.
 
Kloeckner showed in \cite{K} that $2$-Wasserstein spaces over Euclidean spaces have the property that their isometry group is strictly larger than the isometry group of the underlying Euclidean space. {In their recent manuscript, Che, Galaz-Garc\'ia, Kerin, and Santos-Rodr\'iguez extended Kloeckner's result by showing that the isometry group of a $2$-Wasser\-stein space contains non-trivial isometries if the underlying space is of the form $X=H\oplus_{\ell_2} Y$, where $H$ is a Hilbert space and $Y$ is a proper metric space.}

As it was proven in \cite{GTV1} and \cite{GTV2}, the parameter $p=2$ is indeed special, as for all $p\neq 2$ the $p$-Wasserstein space $\mathcal{W}_p(\mathbb{R}^n)$ is isometrically rigid. Interestingly enough, in the case of $X=[0,1]$ the situation is very much different: $\mathcal{W}_p([0,1])$ is isometrically rigid if and only if $p\neq1$, see \cite{GTV1}. In recent years, rigidity results concerning $p$-Wasserstein spaces were proven in various non-euclidean setups as well. The case of the $n$-dimensional tori and spheres is settled in \cite{TnSn}. Bertrand and Kloeckner \cite{BK, BK2} proved the isometric rigidity of the Wasserstein space $\mathcal{W}_2(X)$ over Hadamard manifolds $X$. Furthermore, in \cite{S-R} Santos-Rodr\'iguez considered a broad class of manifolds. He showed that $\mathcal{W}_2(X)$ is isometrically rigid whenever $X$ is a closed Riemannian manifold with strictly positive sectional curvature. Furthermore, for compact rank one symmetric spaces (CROSSes), he proved isometric rigidity for all $p>1$.

This paper aims to study isometries and isometric embeddings of the $p$-Wasser\-stein space $\Wphn$ over the Heisenberg group $\HH^n$ endowed with the Heisenberg-Kor\'{a}nyi metric for all $p>1$ and for all $n\geq1$. The metric structure of the Heisenberg group is rather different from Euclidean spaces or Riemannian manifolds from the viewpoint of rectifiability or Lipschitz extensions \cite{AK, BF, BHW}. Starting from the important contribution of Ambrosio and Rigot \cite{AR} and followed by the papers of Juillet \cite{J, J1}, Figalli and Juillet \cite{FJ} in recent years considerable research has been devoted in order to develop the theory of mass transportation in this geometric setting. In fact, it turns out that the theory of optimal mass transport leads to a deeper understanding of the metric structure of the Heisenberg group and the related geometric inequalities as shown by the results of \cite{J} and  \cite{BKS, BKS1}. These results serve as a strong motivation for further investigating the interplay between the geometry of the underlying space $\HH^n$ and the corresponding $p$-Wasser\-stein space $\Wphn$. 

Before stating our first main result, let us recall that several interesting embedding and non-embedding results were proved earlier by Bertrand
and Kloeckner in \cite{BK2,K,K2} for Wasserstein spaces. In \cite{K2} Kloeckner showed for any metric space $(X,\varrho)$ and any parameter $p\geq1$ that the power $X^k$ admits a bi-Lipschitz embedding into $\mathcal{W}_p(X)$ for all $k\in\mathbb{N}$. So in particular, any power of the Heisenberg group and thus any power of the real line can be embedded such a way into $\Wphn$. Concerning isometric embeddings, Kloeckner showed in \cite{K} that
if $X$ contains a complete geodesic then $\mathcal{W}_2(X)$ contains an isometric copy of the open Euclidean
cone $\mathbb{R}^k_+$ of arbitrary dimension. In particular, it contains isometric embeddings of
Euclidean balls of arbitrary dimension and radius, and bi-Lipschitz embeddings of
$\mathbb{R}^k$ for all $k\in\mathbb{N}$. However, it turned out that if the whole $\mathbb{R}^k$ embeds isometrically into $\mathcal{W}_2(\mathbb{R}^n)$ then $k\leq n$. Our first result says that the same holds true for $\Wphn$ for all $p>1$.

\begin{theorem}\label{embprop}
Let $n \in\mathbb{N}$ and $p>1$. The rank of $\Wphn$ is $n$, that is, $\mathbb{R}^k$ can be embedded isometrically into $\Wphn$ if and only if $k\leq n$.
\end{theorem}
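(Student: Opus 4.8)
The plan is to prove the two implications separately. For the positive direction ($k\le n$ implies $\mathbb{R}^k$ embeds isometrically), I would exploit the fact that $\HH^n$ contains an isometric copy of $\mathbb{R}^n$ as a totally geodesic subset -- concretely, the horizontal subgroup spanned by $n$ of the horizontal coordinate directions (say the $x$-lines through the origin), on which the Korányi metric restricts to the Euclidean metric after a bi-Lipschitz reparametrization; in fact one should pick a linear subspace on which the Korányi gauge agrees with a Euclidean norm up to the standard power, so that the induced length metric is Euclidean. Given such an isometric embedding $\iota\colon\mathbb{R}^n\hookrightarrow\HH^n$, one pushes measures forward: for $k\le n$, take the map $\mathbb{R}^k\to\Wphn$ sending $v$ to (the pushforward under $\iota$ of) a fixed reference measure translated by $v$ inside the $\mathbb{R}^n$-slice. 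Because optimal transport along a totally geodesic flat subspace is realized by the corresponding Euclidean optimal transport (this is exactly the kind of statement the ``link'' between $\mathbb{R}^{2n}$ and $\HH^n$ advertised in the abstract provides), translating a fixed absolutely continuous measure by $v$ has $p$-Wasserstein cost exactly $|v|$, so the embedding is isometric. One must check that the relevant measures have finite $p$-th moment, which is immediate for compactly supported reference measures.

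For the negative direction ($\mathbb{R}^k$ embeds isometrically $\Rightarrow k\le n$), the strategy mirrors Kloeckner's argument for $\mathcal{W}_2(\mathbb{R}^n)$ but must be upgraded to $p>1$ using the description of complete geodesics established earlier in the paper. An isometric copy of $\mathbb{R}^k$ in $\Wphn$ is in particular a union of complete geodesic lines; by the characterization of complete geodesics in $\Wphn$ (quoted from the earlier part of the paper), each such line is of a very rigid form -- a ``straight-line flow'' $t\mapsto (\mathrm{id}+t\,\xi)_{\#}\mu$ governed by a fixed optimal direction field $\xi$, or, in the Heisenberg picture, displacement interpolation along horizontal straight geodesics with velocities constant along the transport. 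Two such geodesic lines through a common point of $\Wphn$ that span a flat $\mathbb{R}^2$ force their velocity fields to be ``compatible'' in the sense that their pointwise combination is again an optimal map of the prescribed type; iterating, a flat $\mathbb{R}^k$ produces $k$ pointwise-orthonormal velocity fields on the support of the base measure $\mu$, each taking values in the horizontal distribution, with orthonormality in the metric that governs the $p$-cost. Since the horizontal distribution of $\HH^n$ is $2n$-dimensional but the constraint from $p>1$-optimality (via the gradient-of-convex-function / monotonicity structure pulled back from $\mathbb{R}^{2n}$) forces these directions to lie in an $n$-dimensional ``Lagrangian-type'' subspace at $\mu$-a.e.\ point, one concludes $k\le n$.

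The main obstacle, and the place where genuine work is needed, is this last dimension count: translating ``$\mathbb{R}^k$ sits isometrically in $\Wphn$'' into ``there are $k$ independent optimal velocity fields'' and then bounding the number of such fields by $n$ rather than by $2n$. This is where the Heisenberg-specific structure enters decisively -- the vertical direction is not horizontal, and within the horizontal distribution the optimality condition for $p>1$ (cyclical monotonicity lifted through the submersion $\HH^n\to\mathbb{R}^{2n}$) cuts the available dimensions in half. I expect the proof to first reduce to the case where the base measure $\mu$ is absolutely continuous with convex support (by an approximation/perturbation argument, moving along the geodesics a little), then to invoke the earlier structural result that complete geodesics correspond to maps of the form $x\mapsto x\cdot \exp(t\, v(x))$ with $v$ horizontal and constrained, and finally to run a linear-algebra argument at a generic point of $\supp\mu$ showing the span of admissible velocities has dimension at most $n$. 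The subtlety is ensuring the ``flatness'' of the embedded $\mathbb{R}^k$ -- i.e.\ that all mixed geodesics between its points are again of the rigid form -- which is precisely what fails for $p=1$ and is the reason the theorem is restricted to $p>1$.
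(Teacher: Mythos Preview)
Your positive direction is essentially right and matches the paper's argument, though the paper is more direct: it simply observes that $\phi(x)=(x,0,0)$ embeds $\mathbb{R}^n$ isometrically into $\HH^n$ (the Kor\'anyi gauge restricts exactly to the Euclidean norm there, with no reparametrization needed), and then sends $x\mapsto\delta_{\phi(x)}$. There is no need for a reference measure or any transport computation.

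The negative direction, however, has a genuine gap. You speak of ``$k$ pointwise-orthonormal velocity fields on $\supp\mu$'' and of reducing to absolutely continuous $\mu$, but this misreads the content of the complete-geodesic classification (the paper's Proposition~\ref{newprop1}). That result says that every complete geodesic through $\mu$ is of the form $t\mapsto(\widehat{T}_{tU})_\#\mu$ for a \emph{single constant} horizontal vector $U\in\HH^n$ --- not a field depending on the point of $\supp\mu$. Consequently, an isometric embedding $\Phi:\mathbb{R}^{n+1}\to\Wphn$ immediately yields a map $\psi:\mathbb{R}^{n+1}\to\HH^n$ (horizontal-vector valued) with $\Phi(u)=(\widehat{T}_{\psi(u)})_\#\mu$. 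By looking at the line through $u$ and $v$ one gets that $\psi(u)^{-1}\ast\psi(v)$ is again horizontal, and then left-invariance of $d_H$ gives $\dwp\big((\widehat{T}_{\psi(u)})_\#\mu,(\widehat{T}_{\psi(v)})_\#\mu\big)=d_H(\psi(u),\psi(v))$. Thus $\psi$ is an isometric embedding of $\mathbb{R}^{n+1}$ into $\HH^n$, and the paper concludes by invoking the pure $(n{+}1)$-unrectifiability of $\HH^n$. None of this requires absolute continuity of $\mu$, velocity fields, or any perturbation.

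Your ``Lagrangian-type subspace'' intuition is not wrong in spirit --- the condition that $\psi(u)^{-1}\ast\psi(v)$ stay horizontal is precisely the vanishing of the standard symplectic form on $\mathbb{R}^{2n}$, so the image of $\psi$ lies in an isotropic (hence $\le n$-dimensional) subspace on which $d_H$ is Euclidean. But you have not identified this mechanism; you attribute the dimension cut to ``monotonicity/gradient-of-convex structure pulled back from $\mathbb{R}^{2n}$'', which is not where it comes from, and your plan to argue pointwise on $\supp\mu$ with fields would not reach it. The missing ideas are: (i) the velocity is a constant vector, so the problem collapses to an embedding question for $\HH^n$ itself; and (ii) the obstruction at that stage is unrectifiability (or, equivalently, the isotropy bound).
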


The strategy of the proof is the following. 
First, in Lemma \ref{lift} we create a link between optimal transport maps in the Euclidean space $\mathbb{R}^{2n}$ and the Heisenberg group $\HH^n$. Then we use this link to understand complete geodesics, i.e., isometric embeddings of $\mathbb{R}$  into $\Wphn$. Namely, we show that complete geodesics are induced by right-translations of the multiples of the same horizontal vector. Using these facts we can give the proof of Theorem \ref{embprop}. As a corollary, we obtain that either of the spaces $\mathcal{W}_p(\mathbb{R}^k)$, or $\mathcal{W}_p(\HH^k)$ can be embedded isometrically into $\Wphn$ if and only if $k\leq n$.\\

In the second part of the paper, we study the isometry group of $\Wphn$ for $p>1$. The main result is the following:

\begin{theorem}\label{isom-rigidity}
Let $p>1$ and $n\geq1$ be fixed. Then $\Wphn$ is isometrically rigid, i.e., for any isometry $\Phi:\Wphn\to\Wphn$ there exists an isometry $\psi:\HH^n\to\HH^n$ such that $\Phi=\psi_{\#}.$
\end{theorem}
The proof of this theorem is based on the description of vertically supported measures as endpoints of geodesic rays (isometric embeddings of $\mathbb{R_+}$ into $\Wphn$). Using this description we can prove that up to an isometry of the base space, all the Dirac masses are fixed by an isometry of $\Wphn$. Moreover, this is true for all vertically supported measures as well.  The technique of the vertical Radon transform is used to finish the proof.

We mention finally that the method of this paper does not work to prove the rigidity of the first Wasserstein space $\mathcal{W}_1({\mathbb H}^n)$. The reason for this is that in this case, we cannot give a metric characterization of complete geodesics and geodesic rays as in the case $\Wphn$ for $p>1$, (see Remark \ref{p=1 geod} for details).  The isometric rigidity of $\mathcal{W}_1({\mathbb H}^n)$ will be treated by a different method in our forthcoming paper \cite{BTV}.

 \section{Preliminary notions, notations, and terminology}
 
 We start with notations that will be used in the sequel, for more details and references on Wasserstein spaces we refer the reader to any of the following comprehensive textbooks \cite{AG,Figalli, Santambrogio,Villani,V}. Let us recall first what a $p$-Wasserstein space is. Let $p\geq1$ be a fixed real number, and let $(X,\varrho)$ be a complete and separable metric space. We denote by $\mathcal{P}(X)$ the set of all Borel probability measures on $X$. The symbol $\supp\mu$ stands for the support of $\mu\in\mathcal{P}(X)$. The set of Dirac measures will be denoted by $\Delta_1(X)=\{\delta_x\,|\,x\in X\}$. A probability measure $\Pi$ on $X \times X$ is called a coupling for $\mu,\nu\in\mathcal{P}(X)$ if the marginals of $\Pi$ are $\mu$ and $\nu$, that is, $$\Pi\ler{A \times X}=\mu(A)\qquad\mbox{and}\qquad\Pi\ler{X \times B}=\nu(B)$$ for all Borel sets $A,B\subseteq X$. The set of all couplings is denoted by $C(\mu,\nu)$. Using couplings, we can define the $p$-Wasserstein distance and the corresponding $p$-Wasserstein space as follows: the $p$-Wasserstein space $\mathcal{W}_p(X)$  is the set of all $\mu\in\mathcal{P}(X)$ that satisfy
\begin{equation}
\int_X \varrho^p(x,\hat{x})~\mathrm{d}\mu(x)<\infty
\end{equation}
for some (and hence all) $\hat{x}\in X$, endowed with the $p$-Wasserstein distance
\begin{equation}
 \label{eq:wasser_def}
\dwp\ler{\mu, \nu}:=\ler{\inf_{\Pi \in C(\mu, \nu)} \iint_{X \times X} \varrho^p(x,y)~\dd \Pi(x,y)}^{1/p}.
\end{equation}
It is known (see e.g. Theorem 1.5 in \cite{AG} with $c=\varrho^p$) that the infimum in \eqref{eq:wasser_def} is, in fact, a minimum in this setting. Those couplings that minimize \eqref{eq:wasser_def} are called optimal transport plans.

As the terminology suggests, all notions introduced above are strongly related to the theory of optimal transportation. Indeed, for given sets $A$ and $B$ the quantity $\Pi(A,B)$ is the weight of mass that is transported from $A$ to $B$ as $\mu$ is transported to $\nu$ along the transport plan $\Pi$.

Given two metric spaces $(Y,d_Y)$ and $(Z,d_Z)$, a map $f:Y\to Z$ is an isometric embedding if $d_Z(f(y),f(y'))=d_Y(y,y')$ for all $y,y'\in Y$. A self-map $\psi\colon Y\to Y$ is called an isometry if it is a surjective isometric embedding of $Y$ onto itself. The symbol $\isom(\cdot)$ will stand for the group of all isometries. For an isometry $\psi\in\isom(X)$ the induced push-forward map is
$$\psi_\# \colon \mathcal{P}(X)\to\mathcal{P}(X);\qquad\big(\psi_\#(\mu)\big)(A)=\mu(\psi^{-1}[A])$$ 
for all Borel sets $A\subseteq X$ and $\mu\in\mathcal{P}(X)$, where 
$\psi^{-1}[A]=\{x\in X\,|\, \psi(x)\in A\}.$ We call $\psi_\#(\mu)$ the \emph{push-forward} of $\mu$ with $\psi$.

A very important feature of $p$-Wasserstein spaces is that $\mathcal{W}_p(X)$ contains an isometric copy of $X$. Indeed, since $C(\delta_x,\delta_y)$ has only one element (the Dirac measure $\delta_{(x,y)})$ {for all $x,y\in X$}, we have that \begin{equation*}\dwp(\delta_x,\delta_y)=\left(\iint_{X\times X}\varrho^p(u,v)~\mathrm{d}\delta_{(x,y)}(u,v)\right)^{1/p}=\varrho(x,y),
\end{equation*}
and thus the embedding
\begin{equation}
\iota\colon X\to\mathcal{W}_p(X),\qquad \iota(x):=\delta_x
\end{equation}
is distance preserving. Furthermore, the set of finitely supported probability measures (in other words, the collection of all finite convex combinations of Dirac measures)
\begin{equation}
    \mathcal{F}(X)=\Bigg\{\sum_{j=1}^k\lambda_j\delta_{x_j}\,\Bigg|\,k\in\mathbb{N},x_j\in X,\,\lambda_j\geq0\,(1\leq j\leq k),\,\sum_{j=1}^k\lambda_j=1\Bigg\}
\end{equation}
is dense in $\mathcal{W}_p(X)$, see e.g. Example 6.3 and Theorem 6.18 in \cite{Villani}. Another important feature is that isometries of $X$ appear in $\isom(\mathcal{W}_p(X))$ by means of a natural group homomorphism
\begin{equation}
 \label{eq:hashtag}
\#\colon \, \mathrm{Isom} (X) \rightarrow \mathrm{Isom}  \ler{\mathcal{W}_p(X)}, \qquad \psi \mapsto \psi_{\#}.
\end{equation}
Isometries that belong to the image of $\#$ are called trivial isometries. If $\#$ surjective, i.e., if every isometry is trivial, then we say that $\mathcal{W}_p(X)$ is isometrically rigid.

In our considerations, the base space $(X,\varrho)$ will be the Heisenberg group endowed with the Heisenberg-Kor\'anyi metric.\\

Let us recall that the underlying space of the Heisenberg group $\mathbb{H}^n$ is $\mathbb{H}^n = \mathbb{R}^n \times \mathbb{R}^n \times \mathbb R$ with the group operation given by 
$$(x,y,z)\ast (x',y', z')= (x+x',y+y',z+z'+2\sum_{i=1}^n (x'_iy_i-x_iy'_i)).$$

The Heisenberg group has a rich group of transformations including left-translations $\tau_{(\widetilde{x},\widetilde{y},
\widetilde{z})}: \mathbb{H}^n \to \mathbb{H}^n$ given by $$\tau_{(\widetilde{x},\widetilde{y},
\widetilde{z})}(x,y,z)= (\widetilde{x},\widetilde{y},
\widetilde{z})\ast (x,y,z)$$ and non-isotropic dilations $\delta_r:\mathbb{H}^n
\to \mathbb{H}^n$, $r>0$ $$\delta_r(x,y,z)= (r x, r y,r^2 z).$$
The left-invariant Heisenberg-Kor\'anyi metric $d_H$ is defined using the group structure of $\mathbb{H}^n$ by the formula $$d_H((x,y,z),
(x',y',z')) = \|(-x,-y,-z) \ast (x',y',z')\|_H, $$ where
$\|\cdot\|_H$ stands for the homogeneous norm on $\mathbb{H}^n$  $$\|(x,y,z)\|_H =
\Big(\big(\sum_{i=1}^n(x^2_i+y^2_i)\big)^2 + z^2\Big)^{1/4}.$$ 

For more information about various left-invariant metrics, isometries and isometric embeddings of the Heisenberg group we refer the interested reader to \cite{BFS}. Let us recall that in the Heisenberg group the $0z$ axis (that is, the set $\{(0,0,z)\,|\,z\in\mathbb{R}\}$) plays a special role, being the center of Heisenberg group.  Left translations of the $0z$ axis are called vertical lines. For a given $(x,y)\in\mathbb{R}^{2n}$ the symbol $L_{(x,y)}$ stands for the vertical line passing through the horizontal vector $(x,y,0)$
$$L_{(x,y)}=\{(x,y,r)\,|\,r\in\mathbb{R}\},$$
and $\mathcal{L}$ denotes the set of all vertical lines
$$\mathcal{L}=\{L_{(x,y)}\,|\,(x,y)\in\mathbb{R}^{2n}\}.$$ 
We call a measure $\mu\in\Wphn$ vertically supported if its support is contained in a vertical line.  For a vertical line $L\in\mathcal{L}$ let us introduce the notation
$$\mathcal{W}_p(L):=\Big\{\mu\in\Wphn\,\Big|\,\supp\mu\subseteq L\Big\}.$$

It is known that any isometry of the Heisenberg group maps vertical lines onto vertical lines (see for example Theorem 1.1 and Lemma 2.3 in \cite{BFS}).
Similarly, complete geodesics and geodesic rays will be preserved by isometries in the sense that the image of a complete geodesic under an isometry is a complete geodesic, and the image of a geodesic ray is again a geodesic ray.
Recall that a complete geodesic is a curve $\gamma: \mathbb{R} \to \mathcal{W}_p(\mathbb H^n)$ such that 
$$d_{\mathcal{W}_p}(\gamma(t),\gamma(s)) = C|t-s|$$ for all $t,s \in \mathbb{R}$ and a constant $C>0$. A geodesic ray is a curve $\gamma: [a, \infty) \to \mathcal{W}_p(\mathbb H^n)$ with the same property. 
Note, that by reparametrising the curve $\gamma$ we can always achieve that $C=1$, and thus $\gamma$ will be an isometric embedding of the real line (or half-line) into  $\mathcal{W}_p(\mathbb H^n)$. Geodesics with $C=1$ will be called unit-speed geodesics.


\section{Complete geodesics and geodesic rays in $\mathcal{W}_p(\mathbb H^n)$.}

\label{s:embeddings}
Our first aim is to understand the structure of isometric embeddings of $\mathbb{R}$ and $\mathbb{R}_+$ into $\Wphn$. On the one hand, isometric copies of the real line will help us later in this section to determine the rank of $\Wphn$. On the other hand, isometric copies of the nonnegative half-line in $\Wphn$ will come in handy for characterizing vertically supported measures. Such measures will play a crucial role in the next section where we will investigate isometric rigidity of Wasserstein spaces.

Optimal transport maps between absolutely continuous measures in the Heisenberg group were studied by Ambrosio and Rigot in \cite{AR}. Since in this paper we shall work with more general (mainly finitely supported) measures; we will need a different way of understanding optimal transport maps acting between them. The approach that we use here is based on the notion of cyclical monotonicity.

Let us recall that a subset $\Gamma \subseteq \HH^n\times \HH^n$ is called to be {$d_H^p$-}cyclically monotone if for any finite selection of points $\{(q_i, q_i')\}_{i=1}^N \subset \Gamma $ we have 
\be \label{cyclic-mon}
\sum_{i=1}^N d_H^p(q_i, q_i') \leq \sum_{i=1}^N d_H^p(q_{i+1}, q_i').
\ee
Here and also in the sequel, we will use the convention that  $q_{N+1} = q_1$. The following is a consequence of Theorem 3.2 of the paper of Ambrosio and Pratelli \cite{AP}. 

\begin{theorem} \label{Ambrosio-Pratelli}
Let $\mu, \nu \in \wth$ and $\Pi$ be a coupling between $\mu$ and $\nu$. Then $\Pi$ is optimal if and only if it is supported on a {$d_H^p$-}cyclically monotone set $\Gamma \subseteq \HH^n\times \HH^n$.
    \end{theorem}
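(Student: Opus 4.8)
The plan is to deduce Theorem~\ref{Ambrosio-Pratelli} directly from the general equivalence between optimality and $c$-cyclical monotonicity proved by Ambrosio and Pratelli in \cite{AP}, specialising their result to the base space $\HH^n$ and the cost $c(q,q'):=d_H^p(q,q')$. Concretely, I would (i) check that $(\HH^n,d_H)$ is a Polish space, (ii) verify that $c=d_H^p$ is continuous and that the Kantorovich problem for $\mu,\nu$ has finite value, so that \cite[Theorem~3.2]{AP} applies, and (iii) observe that the notion of $c$-cyclical monotonicity appearing there coincides with the one encoded in \eqref{cyclic-mon}.

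For (i): since the Korányi gauge $\norm{\cdot}_H$ satisfies the triangle inequality, $d_H$ is a genuine left-invariant metric on $\HH^n$, and since closed Korányi balls are closed bounded subsets of $\mathbb{R}^{2n+1}$ they are compact; thus $(\HH^n,d_H)$ is a proper, hence complete, metric space, and it is clearly separable. So $\HH^n$ is Polish and $C(\mu,\nu)$ is precisely the set of transport plans in the framework of \cite{AP}.

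For (ii): continuity of $c=d_H^p$ is immediate from continuity of the metric. For finiteness, fix a base point $o\in\HH^n$; by the triangle inequality for $d_H$ and the convexity of $t\mapsto t^p$ on $[0,\infty)$,
\be
d_H^p(q,q')\;\leq\;2^{p-1}\ler{d_H^p(q,o)+d_H^p(o,q')}\;=:\;a(q)+b(q').
\ee
Since $\mu,\nu\in\wth$, the function $a$ is $\mu$-integrable and $b$ is $\nu$-integrable, whence $\iint_{\HH^n\times\HH^n}d_H^p(q,q')\,\dd\Pi(q,q')\leq\int a\,\dd\mu+\int b\,\dd\nu<\infty$ for every $\Pi\in C(\mu,\nu)$; in particular $\dwp(\mu,\nu)<\infty$ and the Kantorovich functional is finite on $C(\mu,\nu)$. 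With continuity and finiteness in hand, \cite[Theorem~3.2]{AP} yields that $\Pi\in C(\mu,\nu)$ is optimal if and only if it is concentrated on a $c$-cyclically monotone (in fact $\sigma$-compact) set $\Gamma\subseteq\HH^n\times\HH^n$.

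For (iii): a set $\Gamma$ is $c$-cyclically monotone in the sense of \cite{AP} when $\sum_{i=1}^N c(q_i,q_i')\leq\sum_{i=1}^N c(q_{\sigma(i)},q_i')$ for every finite family $\{(q_i,q_i')\}_{i=1}^N\subseteq\Gamma$ and every permutation $\sigma$ of $\{1,\dots,N\}$. Taking $\sigma$ to be the cyclic shift $i\mapsto i+1$ (with the convention $q_{N+1}=q_1$) gives exactly \eqref{cyclic-mon}; conversely, writing an arbitrary $\sigma$ as a product of disjoint cycles and applying \eqref{cyclic-mon} along each cyclic block recovers the general inequality. Hence the two notions agree for $c=d_H^p$, the set produced above is cyclically monotone in the sense of \eqref{cyclic-mon}, and the proof is complete. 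The only step that requires genuine attention is the finiteness/domination estimate, where $d_H^p$ must be dominated by $a(q)+b(q')$ with $a\in L^1(\mu)$ and $b\in L^1(\nu)$: this is exactly where the defining moment condition of $\wth$ and the triangle inequality for the Korányi metric enter, while the rest is a matter of matching the abstract hypotheses of \cite[Theorem~3.2]{AP} to the Heisenberg setting.
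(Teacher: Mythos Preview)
Your proposal is correct and matches the paper's approach exactly: the paper does not give an independent proof but simply records Theorem~\ref{Ambrosio-Pratelli} as a consequence of \cite[Theorem~3.2]{AP}, and you have carefully verified that the hypotheses of that result (Polish base space, continuous cost, finite Kantorovich functional via the moment condition defining $\wth$) are satisfied in the Heisenberg setting. Your observation in (iii) that the cyclic-shift formulation \eqref{cyclic-mon} is equivalent to the full permutation version of $c$-cyclical monotonicity is a useful clarification that the paper leaves implicit.
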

For given $\mu,  \nu \in \mathcal{W}_p(\mathbb H^n)$ there are situations when the optimal coupling $\Pi$ can be achieved by a transport map $\widehat{T}: \mathbb{H}^n \to \mathbb{H}^n$ with  $(\widehat{T})_{\#} \mu = \nu$ such that $\Pi = (\mathrm{Id}\times \widehat{T})_{\#} \mu$, where $$\mathrm{Id}\times \widehat{T}: \mathbb{H}^n \to \HH^n\times\HH^n, \ (\mathrm{Id}\times \widehat{T})(q)= (q, \widehat{T}(q)), \ \text{for} \ q \in \mathbb{H}^n.$$

In this case, the Wasserstein distance $d_{\mathcal{W}_p} (\mu, \nu)$ can be computed by the formula
$$d^p_{\mathcal{W}_p} (\mu, \nu)= \int_{\mathbb{H}}d^p_H(q, \widehat{T}(q)) ~\mathrm{d}\mu(q).$$ 

We shall create optimal transport maps on $\HH^n$ that are derived from optimal transport maps in the Euclidean space $\mathbb{R}^{2n}$. In order to formulate this result we use the notation $\pi: \HH^n \to \mathbb{R}^{2n}$ for the standard projection $\pi(x,y,z) = (x,y)$. We refer to vectors of the form $(x,y,0)\in\HH^n$ as horizontal vectors, and sometimes we identify them with $\pi(x,y,0)=(x,y)\in\mathbb{R}^{2n}$. For these vectors we have $\|(x,y,0)\|_H=\|(x,y)\|$, where $\|\cdot\|$ denotes the Euclidean norm. By $x\cdot y$ we denote the standard scalar product of two vectors $x,y \in \mathbb{R}^n$, and $d_E$ denotes the Euclidean distance.
\begin{lemma} \label{lift}
Let $\nu \in \wth$ and define $\mu= \pi_{\#} \nu$. Then $\mu \in \cW_{p}(\mathbb{R}^{2n})$. Assume, that the mapping $T: \mathbb{R}^{2n} \to \mathbb{R}^{2n}$, given by  $ T(x,y)= (T_1(x,y), T_2(x,y))$  for all $(x,y) \in \mathbb{R}^{2n}$ is an optimal transport map between $\mu$ and $T_{\#} \mu$.  Then the lifted mapping $\widehat{T}: \HH^n \to \HH^n$ defined by:
\be \label{lifted-map}
\widehat{T}(x,y,z) = (T_1(x,y), T_2(x,y), z + 2( y\cdot T_1(x,y) - x\cdot T_2(x,y)), 
\ee
for  $(x,y,z)\in \HH^n$
 is an optimal transport map between $\nu$ and $\widehat{T}_{\#} \nu$.
\end{lemma}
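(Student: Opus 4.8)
The plan is to verify the three assertions of the lemma in order: that $\mu = \pi_\# \nu \in \cW_p(\mathbb{R}^{2n})$, that $\widehat{T}$ indeed pushes $\nu$ forward to $\widehat{T}_\# \nu$ (which is automatic once $\widehat{T}$ is defined as a measurable map), and — the substantial part — that $\widehat{T}$ is \emph{optimal} as a transport map between $\nu$ and $\widehat{T}_\#\nu$. The first point is an easy estimate: since the Korányi norm controls the Euclidean norm of the horizontal part, $\abs{\pi(q)} \le \|q\|_H$ pointwise (indeed $\big(\sum_i(x_i^2+y_i^2)\big)^2 \le \big(\sum_i(x_i^2+y_i^2)\big)^2 + z^2$, so $\abs{(x,y)}^2 \le \|(x,y,z)\|_H^2$), and hence $\int_{\mathbb{R}^{2n}} \abs{\cdot}^p \, \dd\mu = \int_{\HH^n} \abs{\pi(q)}^p \,\dd\nu(q) \le \int_{\HH^n} \|q\|_H^p \, \dd\nu(q) < \infty$, where we center both integrals at the origin (and its projection). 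So $\mu \in \cW_p(\mathbb{R}^{2n})$.

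For optimality, by Theorem \ref{Ambrosio-Pratelli} it suffices to show that the support of the coupling $\widehat\Pi := (\mathrm{Id}\times\widehat{T})_\#\nu$ lies on a cyclically monotone subset of $\HH^n\times\HH^n$. The natural candidate is $\Gamma := \{(q,\widehat{T}(q)) : q \in \supp\nu\}$, or more precisely its closure; equivalently, I would take a cyclically monotone set $\Gamma_0 \subseteq \mathbb{R}^{2n}\times\mathbb{R}^{2n}$ supporting the optimal Euclidean coupling $(\mathrm{Id}\times T)_\#\mu$ (which exists, again by cyclical monotonicity of optimal plans in $\mathbb{R}^{2n}$, or by Rockafellar's theorem) and set $\Gamma := \{(q,\widehat{T}(q)) : \pi(q) \in \Gamma_0\}$. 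The key computational identity to establish is that for the lifted map the Korányi cost of moving $q=(x,y,z)$ to $\widehat{T}(q)$ equals exactly the \emph{Euclidean} cost of moving $\pi(q)$ to $T(\pi(q))$:
\be
d_H^p\big(q, \widehat{T}(q)\big) = \abs{\pi(q) - T(\pi(q))}^p.
\ee
This is the crucial algebraic fact behind the whole construction: the vertical shift in \eqref{lifted-map} is precisely chosen so that $(-q)\ast\widehet{T}(q)$ has vanishing $z$-component. Writing out the group law, the $z$-component of $(-x,-y,-z)\ast(T_1,T_2,z+2(y\cdot T_1 - x\cdot T_2))$ is $-z + z + 2(y\cdot T_1 - x\cdot T_2) + 2\big((-x)\cdot T_2 - (-y)\cdot T_1\big)$ — wait, one must be careful with the $\HH^n$ convention $2\sum_i(x_i'y_i - x_iy_i')$; carrying this out with the paper's sign convention, the two contributions cancel, leaving $z$-component $0$. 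Hence $\|(-q)\ast\widehat{T}(q)\|_H = \big((\abs{T_1-x}^2+\abs{T_2-y}^2)^2\big)^{1/4} = \abs{\pi(q)-T(\pi(q))}$, giving the displayed identity.

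With this identity in hand, cyclical monotonicity of $\Gamma$ in $\HH^n$ reduces verbatim to cyclical monotonicity of $\Gamma_0$ in $\mathbb{R}^{2n}$: for any finite selection $(q_i, \widehat{T}(q_i))_{i=1}^N$ with $\pi(q_i) =: p_i$ and $\pi(\widehat{T}(q_i)) = T(p_i) =: p_i'$, one needs $\sum_i d_H^p(q_i,\widehat{T}(q_i)) \le \sum_i d_H^p(q_{i+1},\widehat{T}(q_i))$; the left side equals $\sum_i \abs{p_i - p_i'}^p$ by the identity above, while $d_H^p(q_{i+1},\widehat{T}(q_i)) = \|(-q_{i+1})\ast\widehat{T}(q_i)\|_H^p \ge \big((\abs{p_{i+1}-p_i'}^2)^2\big)^{p/4} = \abs{p_{i+1}-p_i'}^p$ because the Korányi norm dominates its horizontal part — so the desired inequality follows from $\sum_i \abs{p_i-p_i'}^p \le \sum_i \abs{p_{i+1}-p_i'}^p$, which is exactly the cyclical monotonicity of $\Gamma_0$ for the Euclidean cost $\abs{\cdot}^p$. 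Finally, since $\widehat\Pi = (\mathrm{Id}\times\widehat{T})_\#\nu$ is concentrated on $\Gamma$ (by definition $\widehat{T}(q)$ depends only on $\pi(q)$, and $\pi(q) \in \supp\mu$ $\nu$-a.e., which sits on $\Gamma_0$), Theorem \ref{Ambrosio-Pratelli} yields optimality. The main obstacle is purely bookkeeping: getting the sign conventions in the Heisenberg group law right so that the vertical component of $(-q)\ast\widehat{T}(q)$ genuinely cancels — everything else is a soft consequence of that identity plus the domination of the Euclidean norm by the Korányi norm on horizontal vectors. One should also note measurability of $\widehat{T}$, which is immediate since $T_1, T_2$ are Borel (being components of an optimal transport map) and the extra coordinate is a Borel function of $(x,y,z)$ and $T_1,T_2$.
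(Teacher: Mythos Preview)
Your proposal is correct and follows essentially the same route as the paper: reduce optimality to cyclical monotonicity via Theorem~\ref{Ambrosio-Pratelli}, use the algebraic fact that the lift \eqref{lifted-map} makes the vertical component of $(-q)\ast\widehat{T}(q)$ vanish so that $d_H(q,\widehat{T}(q))=d_E(\pi(q),T(\pi(q)))$, then combine this with the pointwise domination $d_H\ge d_E\circ(\pi\times\pi)$ to pull the Heisenberg cyclical-monotonicity inequality back to the assumed Euclidean one. The only blemishes are cosmetic: a sign slip in your displayed cross-term (the paper's convention gives $2(T_1\cdot(-y)-(-x)\cdot T_2)$, not what you wrote, though you correctly assert cancellation), and the definition $\Gamma:=\{(q,\widehat{T}(q)):\pi(q)\in\Gamma_0\}$ should read $(\pi(q),T(\pi(q)))\in\Gamma_0$.
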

\begin{proof} Let us take $\nu \in \wth$. Then it is easy to check that $\mu = \pi_{\#}\nu$ is in $\cW_{p}(\mathbb{R}^{2n})$. Let $T: \mathbb{R}^{2n} \to \mathbb{R}^{2n}$ be an optimal transport map between $\mu$ and $T_{\#}\mu$ and $\widehat{T}$ be its lift defined as in \eqref{lifted-map}. We intend to show that the coupling $\widehat{\Pi} := (\widehat{\mathrm{Id}}\times \widehat{T})_{\#}\nu$ is optimal. Here $\widehat{\mathrm{Id}}: \HH^n \to \HH^n$ denotes the identity map of $\HH^n$. On account of Theorem \ref{Ambrosio-Pratelli} we need to check that $\widehat{\Pi}$ is supported on a cyclically monotone set. 

To do that, let $(q_i,\widehat{T}(q_i)))_{i=1}^N$ be a finite set of points in the support of $\widehat{\Pi}$. It is easy to see that the set of points $(\pi(q_i), T(\pi(q_i)))_{i=1}^N$ are in the support of the coupling $\Pi=(\mathrm{Id}\times T)_{\#}\mu$, where $\mathrm{Id}$ is the identity map on $\mathbb{R}^{2n}$. Since by assumption $T$ is an optimal transport map between $\mu$ and $T_{\#} \mu$, it follows that $\Pi$ is an optimal coupling. Applying the Euclidean version of Theorem \ref{Ambrosio-Pratelli} we conclude that the support of $\Pi$ is cyclically monotone. 
In particular, we have the inequality 
$$ \sum_{i=1}^Nd_E^p(\pi(q_i)), T(\pi(q_i)))\leq \sum_{i=1}^N d_E^p(\pi(q_{i+1}), T(\pi(q_i))).$$

We continue by noticing that by the very definition of $\widehat{T}$ (see \eqref{lifted-map}) and the formula of the Heisenberg-Kor\'anyi metric, we have the equality:
$$ \sum_{i=1}^N d_E^p(\pi(q_i), T(\pi(q_i))= \sum_{i=1}^N d_H^p(q_i, \widehat{T}(q_i)).$$

On the other hand, we notice also that by the definition of the Heisenberg-Kor\'anyi metric, for any $q, q'\in \HH^n$ the following inequality holds:
$$ d_H^p(q, \widehat{T}(q')) \geq d_E^p(\pi(q), \pi(\widehat{T}(q'))) = d_E^p(\pi(q), T(\pi(q'))).$$

Combining these relations we conclude
\begin{equation*}
\begin{split}
\sum_{i=1}^N d_H^p(q_i, \widehat{T}(q_i))&= \sum_{i=1}^N d_E^p(\pi(q_i), T(\pi(q_i))\\
&\leq \sum_{i=1}^N d_E^p(\pi(q_{i+1}), T(\pi(q_i)))\\
&\leq \sum_{i=1}^N d_H^p(q_{i+1}, \widehat{T}(q_i)),
\end{split}
\end{equation*}
proving the cyclical monotonicity of the support of $\widehat{\Pi}$ and finishing the proof of the lemma. 
\end{proof}

We aim to apply the above result to produce some optimal mass transport maps that are useful for our purposes. For a non-zero horizontal vector $U= (u,v, 0)\in \mathbb H^n$ we shall consider the family of {\it right-translations} $\widehat{T}_{tU}: \mathbb{H}^n \to \mathbb{H}^n$ defined by $\widehat{T}_{tU}(q)= q \ast (tU)$ for $t\in \mathbb{R}$. For any fixed $q \in \mathbb{H}^n$ and non-zero horizontal vector $U$ the 
curve $t \mapsto \widehat{T}_{tU}(q)$ is a complete geodesic in $\mathbb{H}^n$. Moreover, all complete geodesics in 
$\mathbb{H}^n$ are of this form (see \cite{BFS} Corollary 3.15). 
We are interested in a similar characterization of complete geodesics in $\mathcal{W}_p(\mathbb H^n)$ as described above. In what follows we shall assume that $p>1$.

Let us note that right-translations have been already indicated as optimal transport maps between absolutely continuous measures in the case $p=2$  in \cite{AR}. Originally this result was formulated by Ambrosio and Rigot in the setting of the Wasserstein space with respect to the Carnot-Carath\'eodory metric. As we shall see below, such horizontal right-translations are optimal transport maps also in our case of the Heisenberg-Kor\'anyi metric for general $p>1$. Furthermore, in analogy with the Euclidean case (see \cite[Proposition 3.6]{K}) complete geodesics are all induced by right-translations.

\begin{proposition}\label{newprop1} 
Let $U=(u,v,0)$ be a horizontal vector in $\mathbb{H}^n$ and $t\in \mathbb{R}$. Then the mapping $\widehat{T}_{tU}: \mathbb{H}^n \to \mathbb{H}^n$ defined by 
 \begin{equation} \label{right-transl}
 \widehat{T}_{tU}(x,y,z)= (x,y,z)\ast (tU), \ \text{for} \ (x,y,z) \in \mathbb{H}^n, 
 \end{equation}
that is a right-translation in $\mathbb{H}^n$ by the element $tU$ becomes an optimal transport map between any $\nu \in \wth$ and its image $(\widehat{T}_{tU})_{\#}\nu$. Moreover, a curve $\gamma: \mathbb{R} \to \mathcal{W}_p(\mathbb H^n)$ is a complete geodesic if and only if there exists
 $\mu \in\mathcal{W}_p(\mathbb H^n)$ and a non-zero horizontal vector $U= (u,v,0)$ such that $\gamma(t)= (\widehat{T}_{tU})_{\#} \mu$. 

\end{proposition}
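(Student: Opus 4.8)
The plan is to split the statement into two parts: first, that the horizontal right-translation $\widehat{T}_{tU}$ is an optimal transport map between $\nu$ and $(\widehat{T}_{tU})_\#\nu$; second, that complete geodesics in $\mathcal{W}_p(\mathbb{H}^n)$ are precisely the curves $t\mapsto (\widehat{T}_{tU})_\#\mu$.

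For the first part I would apply Lemma \ref{lift}. The key observation is that the right-translation $\widehat{T}_{tU}$ is exactly the lift of the \emph{Euclidean translation} $T(x,y)=(x,y)+t(u,v)$ on $\mathbb{R}^{2n}$. To see this one just computes the group operation: $(x,y,z)\ast(tu,tv,0)=(x+tu,\,y+tv,\,z+2t(u\cdot y - v\cdot x))$, and checks this agrees with \eqref{lifted-map} for $T_1(x,y)=x+tu$, $T_2(x,y)=y+tv$, since $y\cdot T_1(x,y)-x\cdot T_2(x,y)=y\cdot x + t\,y\cdot u - x\cdot y - t\,x\cdot v = t(u\cdot y - v\cdot x)$. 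Now Euclidean translations are optimal transport maps for $\mathcal{W}_p(\mathbb{R}^{2n})$ for every $p\geq 1$ (translations are gradients of the convex function $(x,y)\mapsto t(u,v)\cdot(x,y)+\tfrac12\|(x,y)\|^2$, hence $c$-cyclically monotone; or: the straight-line coupling is obviously optimal since the cost is constant along it). Therefore Lemma \ref{lift} immediately gives that $\widehat{T}_{tU}$ is an optimal transport map between $\mu=\pi_\#\nu$'s lift $\nu$ and $(\widehat{T}_{tU})_\#\nu$, and the distance formula gives $d_{\mathcal{W}_p}^p(\nu,(\widehat{T}_{tU})_\#\nu)=\int_{\mathbb{H}^n} d_H^p(q,q\ast(tU))\,\mathrm{d}\nu(q) = d_H^p(0,tU)=|t|^p\|U\|_H^p$, using left-invariance of $d_H$. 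Consequently each curve $t\mapsto(\widehat{T}_{tU})_\#\mu$ is a complete geodesic (one checks the semigroup property $\widehat{T}_{sU}\circ\widehat{T}_{tU}=\widehat{T}_{(s+t)U}$, so that $d_{\mathcal{W}_p}(\gamma(t),\gamma(s))=\|U\|_H\,|t-s|$ reduces to the same computation after left-translation).

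For the converse I would follow the Euclidean template of \cite[Proposition 3.6]{K}. Let $\gamma\colon\mathbb{R}\to\mathcal{W}_p(\mathbb{H}^n)$ be a unit-speed complete geodesic; set $\mu=\gamma(0)$. Between consecutive times the geodesic is realized by optimal plans, and by gluing and the strict convexity coming from $p>1$ one shows that the optimal plan from $\gamma(t)$ to $\gamma(s)$ is induced by a map, and that these maps fit together into a one-parameter flow. The projected curve $t\mapsto\pi_\#\gamma(t)$ is then a (possibly constant-speed, possibly degenerate) geodesic in $\mathcal{W}_p(\mathbb{R}^{2n})$; invoking the known rigidity of geodesics in $\mathcal{W}_p(\mathbb{R}^{2n})$ for $p>1$ (complete geodesics there are translations along a fixed direction applied to a fixed measure), one identifies the horizontal motion as $(x,y)\mapsto(x,y)+t(u,v)$ for some fixed $(u,v)$. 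The remaining point is that the vertical component of the transport map must then be forced — this is where I expect the \textbf{main obstacle} to lie: one must rule out any nontrivial ``extra'' vertical shearing and show the lift is exactly the right-translation $\widehat{T}_{tU}$ with $U=(u,v,0)$. The mechanism is that $d_H^p\geq d_E^p\circ(\pi\times\pi)$ with equality only along the specific vertical adjustment appearing in \eqref{lifted-map}; since $\gamma$ is a geodesic its total cost is additive over subdivisions and must simultaneously match the Euclidean lower bound, forcing equality in the Korányi estimate $\mu$-a.e., which pins down the vertical coordinate of the transport map to be precisely $z\mapsto z+2t(u\cdot y-v\cdot x)$. One also needs $U\neq 0$: if $U=0$ the curve is constant, not a complete geodesic.

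Finally I would assemble the two directions, noting that the parametrization constant $C$ equals $\|U\|_H$ and that replacing $U$ by $U/\|U\|_H$ gives the unit-speed normalization mentioned in the preliminaries, so the statement holds as phrased for an arbitrary non-zero horizontal $U$.
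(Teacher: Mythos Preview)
Your first part---optimality of $\widehat{T}_{tU}$ via Lemma~\ref{lift} and the forward geodesic direction---is correct and matches the paper's argument essentially verbatim.

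The converse direction, however, has a genuine gap. You assert that the projected curve $t\mapsto\pi_\#\gamma(t)$ is a geodesic in $\mathcal{W}_p(\mathbb{R}^{2n})$, but this does not follow from general principles: $\pi$ is merely $1$-Lipschitz for the two metrics, and $1$-Lipschitz images of geodesics need not be geodesics. To prove that the projection \emph{is} a geodesic you would need to know that any optimal plan $\widehat\Pi_{s,t}$ between $\gamma(s)$ and $\gamma(t)$ is supported on pairs $(q,q')$ with $d_H(q,q')=d_E(\pi(q),\pi(q'))$, i.e.\ pairs lying on a common horizontal line---but this is precisely the first nontrivial step of the paper's argument (points in the support of the optimal plan trace complete geodesics of $\mathbb{H}^n$, and by \cite[Corollary 3.15]{BFS} such geodesics are horizontal lines). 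Your proposal skips this step and then invokes the Euclidean classification, so the logic is circular. A second issue is that \cite[Proposition 3.6]{K} treats only $p=2$; for general $p>1$ the statement that complete geodesics in $\mathcal{W}_p(\mathbb{R}^{2n})$ are translation flows is true but needs its own cyclical-monotonicity argument, essentially of the same flavor as what you are trying to avoid.

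The paper instead works entirely inside $\mathbb{H}^n$: given two pairs $(q,q\ast tU)$ and $(q',q'\ast tU')$ in the support of the optimal plan, it normalizes so that $q'=0$ and $U'=(1,0,\dots,0)$, writes down the two-point cyclical monotonicity inequality for $\widehat\Pi_{-s,t}$, sends $t\to\infty$, and reads off from a first-order Taylor expansion in $r=1/t$ that $U=U'$. This direct argument handles the ``vertical forcing'' and the ``same direction for all points'' issues simultaneously, without any appeal to Euclidean Wasserstein results.
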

\begin{proof}
In order to verify the first part of the claim we will apply Lemma \ref{lift} to the mapping 
$$ T_{tU}: \mathbb{R}^{2n} \to \mathbb{R}^{2n} , \ \ T_{tU}(x,y) = (x+tu, y +tv), \ (x,y) \in \mathbb{R}^{2n}. $$
We have to check that $T_{tU}$ is an optimal transport map between an arbitrary 
$\mu \in \cW_{p}(\mathbb{R}^{2n})$ and its push-forward 
$(T_{tU})_{\#}\mu.$ {However, it is known by \cite[Section 5.1 (Appendix)]{PSP15} that translations are optimal transport maps on the Euclidean space $\R^d$ as long as the transport cost is convex, that is, $c(x,y)=l(\norm{x-y})$ for a convex function $l.$}

Now we know that $\widehat{T}_{tU}:  \mathbb{H}^n \to \mathbb{H}^n$ is an optimal transport map between $\mu$ and $(\widehat{T}_{tU})_{\#}\mu$, and thus the curve $\gamma(t): = (\widehat{T}_{tU})_{\#}\mu$ is a complete geodesic in 
$\mathcal{W}_p(\mathbb H^n)$. Indeed, for  $s<t$ we have 
$$d^p_{\mathcal{W}_p}(\gamma(t),\gamma(s))= \int_{\mathbb{H}^n}d^p_H(q, \widehat{T}_{(t-s)U}(q)) ~\mathrm{d}\gamma(s)(q) = (t-s)^p||U||^p.$$
To prove the reverse implication, suppose that $\gamma: \mathbb{R} \to \mathcal{W}_p(\mathbb H^n)$ is a complete geodesic. We shall define $\mu:= \gamma(0)$ and we intend to find a non-zero horizontal vector $U= (u,v,0)$ such that $\gamma(t)= (\widehat{T}_{tU})_{\#} \mu$.
\par
Let $a<b<c \in \R$ and let $\pi_{ab}$ and $\pi_{bc}$ be optimal transport plans from $\gamma(a)$ to $\gamma(b),$ and from $\gamma(b)$ to $\gamma(c),$ respectively. By the ``gluing lemma'', see \cite[Lemma 7.6]{V}, there is a probability measure $\pi_{abc} \in \mathcal{P}((\HH^n)^3)$ such that $(\pi_{abc})_{12}=\pi_{ab}, \, (\pi_{abc})_{23}=\pi_{bc},$ and $\pi_{ac}:=(\pi_{abc})_{13}$ is a coupling of $\gamma(a)$ and $\gamma(c).$ Here, we use the notation 
$(\pi_{abc})_{12}$ for the push-forward of $\pi_{abc}$ by the projection of $(\HH^n)^3$ onto the first two coordinates. A similar meaning is applied to $(\pi_{abc})_{13}$ and $(\pi_{abc})_{23}$ as well.  Using these notations we can write:
\begin{equation*}
\begin{split}
&\dwp(\gamma(a),\gamma(c))\leq\\ 
&\leq \ler{~\iiint\limits_{(\HH^n)^3} d_H^p(q_a,q_c) \dd \pi_{abc}(q_a,q_b,q_c)}^{\frac{1}{p}}\\
&\leq \ler{~\iiint\limits_{(\HH^n)^3} \Big(d_H(q_a,q_b)+d_H(q_b,q_c)\Big)^p \dd \pi_{abc}(q_a,q_b,q_c)}^{\frac{1}{p}}\\
&\leq
\ler{~\iiint\limits_{(\HH^n)^3} d_H^p(q_a,q_b) \dd \pi_{abc}(q_a,q_b,q_c)}^{\frac{1}{p}}
+
\ler{~\iiint\limits_{(\HH^n)^3} d_H^p(q_b,q_c) \dd \pi_{abc}(q_a,q_b,q_c)}^{\frac{1}{p}}\\
&=\dwp(\gamma(a),\gamma(b))+\dwp(\gamma(b),\gamma(c))\\
&=\dwp(\gamma(a),\gamma(c)).
\end{split}
\end{equation*}
This means that all inequalities in the above chain are saturated. The saturation of the first inequality is equivalent to the optimality of the coupling $\pi_{ac}.$ The saturation of the second inequality implies that the triangle inequality $d_H(q_a,q_c) \leq d_H(q_a,q_b)+d_H(q_b,q_c)$ is saturated $\pi_{abc}$-almost everywhere. The assumption $p>1$ becomes crucial when we deduce from {the geodesic property of $\gamma$ and} the saturation of the third inequality, which is an $L^p$-Minkowski inequality, that
\be \label{eq:Lp-mink-sat}
d_H(q_a,q_b)=\frac{b-a}{c-a}d_H(q_a,q_c) \text{ and } d_H(q_b,q_c)=\frac{c-b}{c-a}d_H(q_a,q_c)
\ee
$\pi_{abc}$-almost everywhere. By the horizontal strict convexity of the Heisenberg-Kor\'anyi norm (see \cite{BFS}), this implies that 

for $\pi_{abc}$-almost every $(q_a,q_b,q_c)$ the vectors
$q_a^{-1}*q_b$ and $q_b^{-1}*q_c$ and $q_a^{-1}*q_c$ are horizontal vectors, and
\be \label{eq:HK-hor-str-conv}
q_a^{-1}*q_b=\delta_{\frac{b-a}{c-a}} \ler{q_a^{-1}*q_c} \text{ and } q_b^{-1}*q_c=\delta_{\frac{c-b}{c-a}}\ler{q_a^{-1}*q_c},
\ee
which in turn implies that
\be \label{eq:HK-hor-str-conv-2}
q_a^{-1}*q_b=\delta_{\frac{b-a}{c-b}}\ler{q_b^{-1}*q_c}.
\ee
Let $\pi_{01}$ be an optimal coupling of $\gamma(0)$ and $\gamma(1).$
Let $t>1,$ choose $a:=0, \, b:=1,$ and $c:=t,$ and let $\pi_{1t}$ be an optimal coupling of $\gamma(1)$ and $\gamma(t).$
With the above choice of $a,b,$ and $c,$ the first equation of \eqref{eq:HK-hor-str-conv} reads as
\be \label{eq:master01t}
q_0^{-1}*q_1=\delta_{\frac{1}{t}}\ler{q_0^{-1}*q_t},
\ee
which equivalent to 
\be \label{eq:master01tv2}
q_t=q_0*\delta_{t}\ler{q_0^{-1}*q_1}
\ee
by a straightforward computation and using the fact that the inverse of the non-isotropic dilation $\delta_{\frac{1}{t}}$ is $\delta_t.$ 
So we get that \eqref{eq:master01tv2} holds for $\pi_{01t}$ -a.e. $(q_0,q_1,q_t),$ where $\pi_{01t}$ is the gluing of $\pi_{01}$ and $\pi_{1t},$ which implies that 
\be \label{eq:pi-01t-form}
\pi_{01t}=\ler{\lers{(q_0,q_1) \mapsto \ler{q_0, q_1, q_0*\delta_{t}\ler{q_0^{-1}*q_1}}}}_{\#}\ler{\pi_{01}}.
\ee
In particular,
\be \label{eq:pi-0t-form}
\pi_{0t}=\ler{\pi_{01t}}_{13}
=\ler{\lers{(q_0,q_1) \mapsto \ler{q_0, q_0*\delta_{t}\ler{q_0^{-1}*q_1}}}}_{\#}\ler{\pi_{01}},
\ee
and
\be \label{eq:gamma-t-form}
\gamma(t)=\ler{\lers{(q_0,q_1) \mapsto q_0*\delta_{t}\ler{q_0^{-1}*q_1}}}_{\#}\ler{\pi_{01}}.
\ee
Let $s>0$ and consider the choice $a:=-s, \, b:=0,$ and $c:=1.$ Let $\pi_{-s0}$ be an optimal coupling of $\gamma(-s)$ and $\gamma(0),$ and let $\pi_{-s01}$ be the gluing of $\pi_{-s0}$ and $\pi_{01}.$ In this case, equation \eqref{eq:HK-hor-str-conv-2} has the following form:
\be \label{eq:master-s01}
q_{-s}^{-1}*q_0=\delta_{s}\ler{q_{0}^{-1}*q_1},
\ee
which is equivalent (by a short algebraic computation on the Heisenberg group) to
\be \label{eq:master-s01v2}
q_{-s}=q_0*\delta_{s}\ler{q_1^{-1}*q_0}.
\ee
That is, \eqref{eq:master-s01v2} holds for $\pi_{-s01}$-a.e. $(q_{-s},q_0,q_1),$ and hence it follows that 
\be \label{eq:pi-mins01-form}
\pi_{-s01}=\ler{\lers{(q_0,q_1) \mapsto \ler{q_0*\delta_{s}\ler{q_1^{-1}*q_0},q_0, q_1,}}}_{\#}\ler{\pi_{01}}.
\ee
In particular,
\be \label{eq:pi-mins0-form}
\pi_{-s0}=\ler{\pi_{-s01}}_{12}
=\ler{\lers{(q_0,q_1) \mapsto \ler{q_0*\delta_{s}\ler{q_1^{-1}*q_0},q_0}}}_{\#}\ler{\pi_{01}},
\ee
and
\be \label{eq:gamma-mins-form}
\gamma(-s)=\ler{\lers{(q_0,q_1) \mapsto q_0*\delta_{s}\ler{q_1^{-1}*q_0}}}_{\#}\ler{\pi_{01}}.
\ee
In the next step of the proof we intend to extract additional information on the form of the coupling $\pi_{01}$ appearing on the right side of \eqref{eq:gamma-t-form-v2} and \eqref{eq:gamma-mins-form-v2}. We shall prove that there exists a non-zero horizontal vector $U$ with the property that 
$$
\pi_{01}=\ler{\mathrm{id} \times \hat{T}_{U}}_{\#}\ler{\gamma(0)}.
$$
In order to do that let us note first, by the comparison of \eqref{eq:pi-0t-form} and \eqref{eq:pi-mins0-form} that the gluing of $\pi_{-s,0}$ and $\pi_{0,t},$ which we shall denote by $\pi_{-s0t},$ is given by
\be \label{eq:pi-mins0t-form}
\pi_{-s0t}
=\ler{\lers{(q_0,q_1) \mapsto \ler{q_0*\delta_{s}\ler{q_1^{-1}*q_0},q_0, q_0 * \delta_t\ler{q_0^{-1}*q_1}}}}_{\#}\ler{\pi_{01}}.
\ee
So if $(q_0,q_1) \in \supp\ler{\pi_{01}}$ then 
$$
\ler{q_0*\delta_{s}\ler{q_1^{-1}*q_0},q_0 * \delta_t\ler{q_0^{-1}*q_1}} \in \supp\ler{\pi_{-st}}
$$
where $\pi_{-st}=\ler{\pi_{-s0t}}_{13}.$ As we obtained $\pi_{-st}$ by gluing the optimal couplings $\pi_{-s0}$ (between $\gamma(-s)$ and $\gamma(0)$) and $\pi_{0t}$ (between $\gamma(0)$ and $\gamma(t)$), the coupling $\pi_{-st}$ is also optimal between $\gamma(-s)$ and $\gamma(t).$ Therefore, its support is cyclically monotone, which is going to be an essential point of the proof.
\par
Let $(q_0,q_1)$ and $(q_0',q_1')$ be points in the support of $\pi_{01},$ and let $(u,v,0):=q_0^{-1}*q_1$ and $(u',v',0):=q_0'^{-1}*q_1'$ -- note that this parametrization relies on the information we obtained before that both $q_0^{-1}*q_1$ and $q_0'^{-1}*q_1'$ are horizontal vectors. Let us choose $s:=t.$
Applying the cyclical monotonicity to the points $(q_{-t}, q_t)$ and $(q'_{-t}, q'_t)$ in the support of $\pi_{-tt}$ we obtain: 
\begin{equation} \label{eq:cyclical-mon} 
d^p_H(q_{-t}, q_t) + d^p_H(q'_{-t}, q'_{t}) \leq d^p_H(q_{-t}, q'_t) + d^p_H(q'_{-t}, q_t).
\end{equation}
If two points in $\HH^n$ lie on the same horizontal line then their Heisenberg distance coincides with the Euclidean distance. Therefore, the left-hand side of \eqref{eq:cyclical-mon} can be calculated as follows:
$$
\norm{q_{-t}^{-1}*q_t}_H^p+\norm{q_{-t}'^{-1}*q'_t}_H^p
=\norm{(2tu,2tv,0)}_H^p+\norm{(2tu',2tv',0)}_H^p
$$
\be \label{eq:cycl-mon-lhs}
=(2t)^p\ler{\norm{(u,v,0)}_{H}^p+\norm{(u',v',0)}_{H}^p}.
\ee
The right-hand side of \eqref{eq:cyclical-mon} is more involved and it reads as follows:
$$
\norm{q_{-t}^{-1}*q_t'}_H^p+\norm{q_{-t}'^{-1}*q_t}_H^p
=\norm{\ler{q_0*\delta_{t}\ler{q_1^{-1}*q_0}}^{-1}*\ler{q_0'*\delta_{t}\ler{q_0'^{-1}*q_1'}}}_H^p+
$$
$$
+\norm{\ler{q_0'*\delta_{t}\ler{q_1'^{-1}*q_0'}}^{-1}*\ler{q_0*\delta_{t}\ler{q_0^{-1}*q_1}}}_H^p
$$
$$
=\norm{(tu,tv,0)*q_0^{-1}*q_0'*(tu',tv',0)}_H^p+\norm{(tu',tv',0)*q_0'^{-1}*q_0*(tu,tv,0)}_H^p.
$$
Let us introduce $(x,y,z):=q_0^{-1}*q_0'.$ Then
$$
q_{-t}^{-1}*q_t'=(tu,tv,0)*(x,y,z)*(tu',tv',0)
$$
$$
=\ler{x+t(u+u'),y+t(v+v'), z+2t(v\cdot x - u \cdot y+ u' \cdot y -v'\cdot x)+2t^2(u'\cdot v-v'\cdot u)}.
$$
Therefore, for large values of $t > 1$ we have that
\begin{equation*}
\begin{split}
\norm{q_{-t}^{-1}*q_t'}_H^p&=\Bigg(\ler{\abs{x+t(u+u')}^2+\abs{y+t(v+v')}^2}^2+\\
&\hspace{1.5cm}+\ler{z+2t(v\cdot x - u \cdot y+ u' \cdot y -v'\cdot x)+2t^2(u'\cdot v-v'\cdot u)}^2\Bigg)^{\frac{p}{4}}\\
&=\ler{\ler{t^2\abs{u+u'}^2+t^2\abs{v+v'}^2}^2+4t^4(u'\cdot v-v'\cdot u)^2+\bigO(t^3)}^{\frac{p}{4}}.
\end{split}
\end{equation*}
A very similar computation shows that 
\begin{equation*}
\begin{split}
\norm{q_{-t}'^{-1}*q_t}_H^p
&=\norm{(tu',tv',0)*(-x,-y,-z)*(tu,tv,0)}_H^p\\
&=\ler{\ler{t^2\abs{u'+u}^2+t^2\abs{v'+v}^2}^2+4t^4(u\cdot v'-v\cdot u')^2+\bigO(t^3)}^{\frac{p}{4}}.
\end{split}
\end{equation*}
Therefore,
$$
\lim_{t \to +\infty} t^{-p} \norm{q_{-t}^{-1}*q_t'}_H^p
=\ler{\ler{\abs{u+u'}^2+\abs{v+v'}^2}^2+\ler{2(u'\cdot v-v'\cdot u)}^2}^{\frac{p}{4}}
$$
\be \label{eq:t-lim-rhs-1}
=\norm{(u,v,0)*(u',v',0)}_H^p.
\ee
Similarly,
$$
\lim_{t \to +\infty} t^{-p} \norm{q_{-t}'^{-1}*q_t}_H^p
=\ler{\ler{\abs{u'+u}^2+\abs{v'+v}^2}^2+\ler{2(u\cdot v'-v\cdot u')}^2}^{\frac{p}{4}}
$$
\be \label{eq:t-lim-rhs-2}
=\norm{(u',v',0)*(u,v,0)}_H^p.
\ee
We now compare \eqref{eq:cycl-mon-lhs} with \eqref{eq:t-lim-rhs-1} and \eqref{eq:t-lim-rhs-2} to obtain that the cyclical monotonicity of the support of $\pi_{-tt}$ implies that
\be \label{eq:CM-conclusion}
2^p\ler{\norm{(u,v,0)}_H^p+\norm{(u',v',0)}_H^p} \leq \norm{(u,v,0)*(u',v',0)}_H^p+\norm{(u',v',0)*(u,v,0)}_H^p.
\ee
The Heisenberg norm is subadditive with respect to the group operation $*$ and hence it follows from \eqref{eq:CM-conclusion} that 
\be \label{eq:CM-concl-cor}
2^p\ler{\norm{(u,v,0)}_H^p+\norm{(u',v',0)}_H^p} \leq 2 \ler{\norm{(u,v,0)}_H+\norm{(u',v',0)}_H}^p.
\ee
Straightforward calculations show that \eqref{eq:CM-concl-cor} is equivalent to 
\be \label{eq:m-p-m-1}
\ler{\frac{\norm{(u,v,0)}_H^p+\norm{(u',v',0)}_H^p}{2}}^{\frac{1}{p}}\leq
\frac{\norm{(u,v,0)}_H+\norm{(u',v',0)}_H}{2},
\ee
that is, the $p$-power mean of $\norm{(u,v,0)}_H$ and $\norm{(u',v',0)}_H$ is bounded from above by their arithmetic mean. On the other hand, the $p$-power mean of non-negative numbers is strictly monotone increasing in $p,$ that is, 
$$
\ler{\frac{\norm{(u,v,0)}_H^p+\norm{(u',v',0)}_H^p}{2}}^{\frac{1}{p}}\geq
\frac{\norm{(u,v,0)}_H+\norm{(u',v',0)}_H}{2},
$$
which implies that the two sides of \eqref{eq:m-p-m-1} are equal. Consequently, the two sides of \eqref{eq:CM-concl-cor} are also equal, which implies that
$$
\norm{(u,v,0)*(u',v',0)}_H=\norm{(u,v,0)}_H+\norm{(u',v',0)}_H.
$$
Using again the Horizontal strict convexity of the Heisenberg norm \cite{BFS} we get that there is a $\lambda \geq 0$ such that $(u',v',0)=\lambda (u,v,0).$ Furtheremore, the inequality \eqref{eq:m-p-m-1} can happen only if $\norm{(u,v,0)}_H=\norm{(u',v',0)}_H.$ Therefore, $\lambda=1$ and $(u',v',0)=(u,v,0).$ That is, we obtained that $q_0^{-1} *q_1=q_0'^{-1}*q_1'.$ This means that there is a horizontal vector $U=(u,v,0)$ such that $q_1=q_0*U$ for all points $(q_0,q_1)$ in the support of $\pi_{01}.$ This means that
$$
\pi_{01}=\ler{\mathrm{id} \times \hat{T}_U}_{\#} \ler{\gamma(0)}
$$
and in particular $\gamma(1)=\ler{\hat{T}_U}_{\#}\ler{\gamma(0)}.$ Using the equations \eqref{eq:gamma-t-form} and \eqref{eq:gamma-mins-form} we get that
\be \label{eq:gamma-t-form-v2}
\gamma(t)=\ler{\hat{T}_{tU}}_{\#}\ler{\gamma(0)}.
\ee
and
\be \label{eq:gamma-mins-form-v2}
\gamma(-s)=\ler{\hat{T}_{(-sU)}}_{\#}\ler{\gamma(0)}
\ee
for any $s>0$ and $t>1.$
If $0<r<1,$ then choosing $a:=0, b:=r,$ and $c:=1$ we get that for any optimal coupling $\pi_{0r}$ of $\gamma(0)$ and $\gamma(r)$ and for any optimal coupling $\pi_{r1}$ of $\gamma(r)$ and $\gamma(1)$ we have
\be \label{eq:pi-0r1-master}
q_0^{-1}*q_r= \delta_r\ler{q_0^{-1}*q_1} \Leftrightarrow  q_r= q_0 *\delta_r\ler{q_0^{-1}*q_1}
\ee
for $\pi_{0r1}$-a.e. $(q_0,q_r,q_1),$ where $\pi_{0r1}$ is the gluing of $\pi_{0r}$ and $\pi_{r1}.$ The coupling $\ler{\pi_{0r1}}_{13}$ is optimal for $\gamma(0)$ and $\gamma(1)$ and hence $q_0^{-1}*q_1\equiv U$ for $\ler{\pi_{0r1}}$-a.e. $(q_0,q_1).$ This implies by \eqref{eq:pi-0r1-master} that $q_r=q_0*(rU)$ $\pi_{0r1}$-almost everywhere, and hence in particular
\be \label{eq:gamma-r-form}
\gamma(r)=\ler{\pi_{0r1}}_2=\ler{\lers{q_0 \mapsto q_0*(rU)}}_{\#}\ler{\gamma(0)}=\ler{\hat{T}_{(rU)}}_{\#}\ler{\gamma(0)}
\ee
as desired.

\color{black}
\end{proof}

\begin{remark}\label{p=1 geod} The assumption that $p>1$ is crucial in Proposition \ref{newprop1}. If $p=1$, then complete geodesics can have a more complicated structure. To see a very simple example, the complete geodesic $\gamma:\mathbb{R} \to \mathcal{W}_1(\HH)$, $\gamma(t):=\delta_{(t,0,0)}$ can be modified as 
    \[   
\widetilde{\gamma}(t):= 
     \begin{cases}
       \delta_{(t,0,0)} &\quad\text{if}\quad t\notin[0,1], \\
       (1-t)\delta_{(0,0,0)}+t\delta_{(1,0,0)}&\quad\text{if}\quad t\in[0,1], \\ 
     \end{cases}
\]
which is still a complete geodesic, but there is no $\mu\in\mathcal{W}_1(\HH)$ and horizontal vector $U$ such that $\gamma(t)= (\widehat{T}_{tU})_{\#} \mu$.
\end{remark}
Our next proposition is concerned with horizontal dilations. This will be used to describe geodesic rays in $\mathcal{W}_p(\mathbb{H}^n)$. Let us recall that geodesic rays in Euclidean Wasserstein spaces $\mathcal{W}_p(\mathbb{R}^n)$  can be obtained by considering Euclidean dilations
$$D_{\lambda}:\mathbb{R}^n\to\mathbb{R}^n,\quad D_{\lambda}(x)=\lambda x$$ 
for $\lambda \geq 0$ that are optimal transport maps from any $\mu \in \mathcal{W}_p(\mathbb{R}^n)$ to $(D_{\lambda})_{\#}\mu$ (see Section 2.3 in \cite{K}, for the case $p=2$, and the Appendix of \cite{GTV2} for the case of general $p$). Unfortunately in the case of the Heisenberg group,  the non-isotropic Heisenberg dilations are not optimal transport maps and therefore we cannot obtain geodesic rays in $\mathcal{W}_p(\mathbb{H}^n)$ in this way. 

To get around this difficulty we shall consider {\it horizontal dilations}: 
$\widehat{D}_{\lambda}: \mathbb{H}^n \to \mathbb{H}^n$ defined by 
$$\widehat{D}_{\lambda} (x,y,z) = (\lambda x, \lambda y, z), \ \text{for all} \ \lambda \geq 0 , \ \text{and} \ (x,y,z) \in \mathbb{H}^n.$$

\begin{proposition} \label{horizontal-dilations}
{Let us fix a $p>1$, and c}onsider the horizontal dilation $\widehat{D}_{\lambda}: \mathbb{H}^n \to \mathbb{H}^n$ for all $\lambda \geq 0$. We claim that $\widehat{D}_{\lambda}$ is an optimal transport map between $\mu$ and $(\widehat{D}_{\lambda})_{\#}\mu.$
Moreover, if $\mu$ is not supported on the vertical $0z$-axis, then the curve 
$$\gamma:[0,\infty)\to\Wphn;\quad\gamma(\lambda):=(\widehat{D}_{\lambda})_{\#}\mu$$
is a geodesic ray containing $\mu=\gamma(1)$.
\end{proposition}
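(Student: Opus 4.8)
The plan is to derive both assertions from Lemma~\ref{lift}. For the first one, consider the Euclidean dilation $D_\lambda\colon\mathbb{R}^{2n}\to\mathbb{R}^{2n}$, $D_\lambda(x,y)=(\lambda x,\lambda y)$. It is a standard fact that $D_\lambda$ is an optimal transport map between any $\sigma\in\cW_{p}(\mathbb{R}^{2n})$ and $(D_\lambda)_{\#}\sigma$ (see Section~2.3 of \cite{K} for $p=2$ and the Appendix of \cite{GTV2} for general $p$; it also follows from the triangle inequality in $\cW_{p}(\mathbb{R}^{2n})$ applied to $\sigma$, $(D_\lambda)_{\#}\sigma$ and $\delta_0$, since $\dwp((D_\lambda)_{\#}\sigma,\delta_0)=\lambda\,\dwp(\sigma,\delta_0)$ while the coupling $(\mathrm{Id}\times D_\lambda)_{\#}\sigma$ has cost $|1-\lambda|^p\,\dwp^p(\sigma,\delta_0)$). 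Next I compute the lift of $D_\lambda$ prescribed by formula~\eqref{lifted-map}: with $T_1(x,y)=\lambda x$ and $T_2(x,y)=\lambda y$ the new third coordinate equals $z+2\big(y\cdot(\lambda x)-x\cdot(\lambda y)\big)=z$, because $y\cdot x=x\cdot y$. Hence the lift of $D_\lambda$ is exactly the horizontal dilation $\widehat{D}_\lambda$, and Lemma~\ref{lift} (applied with $\nu=\mu$) yields that $\widehat{D}_\lambda$ is an optimal transport map between $\mu$ and $(\widehat{D}_\lambda)_{\#}\mu$. This settles the first claim.

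For the geodesic-ray statement, put $\gamma(\lambda)=(\widehat{D}_\lambda)_{\#}\mu$ and record the semigroup identity $\widehat{D}_a\circ\widehat{D}_b=\widehat{D}_{ab}$; in particular $\widehat{D}_1=\mathrm{Id}$, so $\gamma(1)=\mu$. Fix $0\leq\lambda\leq\lambda'$. If $\lambda>0$ then $(\widehat{D}_{\lambda'/\lambda})_{\#}\gamma(\lambda)=\gamma(\lambda')$, and the first part of the proposition applied with base measure $\gamma(\lambda)$ and parameter $\lambda'/\lambda\geq0$ shows that $\widehat{D}_{\lambda'/\lambda}$ is an optimal transport map between $\gamma(\lambda)$ and $\gamma(\lambda')$; if $\lambda=0$ then symmetrically $\widehat{D}_0$ is an optimal transport map from $\gamma(\lambda')$ onto $\gamma(0)$. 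A short computation with the group law gives, for every $q=(x,y,z)\in\HH^n$ and all $\lambda,\lambda'\geq0$,
\[
\widehat{D}_\lambda(q)^{-1}\ast\widehat{D}_{\lambda'}(q)=\big((\lambda'-\lambda)x,\,(\lambda'-\lambda)y,\,0\big),
\]
where the vertical component cancels once more because $x\cdot y=y\cdot x$; this horizontal vector has homogeneous norm $|\lambda-\lambda'|\,\|\pi(q)\|$ (Euclidean norm on $\mathbb{R}^{2n}$), so $d_H\big(\widehat{D}_\lambda(q),\widehat{D}_{\lambda'}(q)\big)=|\lambda-\lambda'|\,\|\pi(q)\|$. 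Using the formula for the Wasserstein distance along an optimal transport map together with the change of variables $\gamma(\lambda)=(\widehat{D}_\lambda)_{\#}\mu$, we then obtain
\begin{align*}
\dwp^p(\gamma(\lambda),\gamma(\lambda'))&=\int_{\HH^n}d_H^p\big(q,\widehat{D}_{\lambda'/\lambda}(q)\big)\,\mathrm{d}\gamma(\lambda)(q)\\
&=\int_{\HH^n}d_H^p\big(\widehat{D}_\lambda(q),\widehat{D}_{\lambda'}(q)\big)\,\mathrm{d}\mu(q)=|\lambda-\lambda'|^p\int_{\HH^n}\|\pi(q)\|^p\,\mathrm{d}\mu(q),
\end{align*}
and the same chain of equalities holds when $\lambda=0$ (transporting $\gamma(\lambda')$ onto $\gamma(0)$ by $\widehat{D}_0$).

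Finally, set $C:=\big(\int_{\HH^n}\|\pi(q)\|^p\,\mathrm{d}\mu(q)\big)^{1/p}$, which is finite since $\pi_{\#}\mu\in\cW_{p}(\mathbb{R}^{2n})$ by Lemma~\ref{lift}. The computation above shows $\dwp(\gamma(\lambda),\gamma(\lambda'))=C\,|\lambda-\lambda'|$ for all $\lambda,\lambda'\geq0$. If $\mu$ is not supported on the $0z$-axis, then $\|\pi(q)\|>0$ on a set of positive $\mu$-measure, hence $C>0$, so $\gamma\colon[0,\infty)\to\Wphn$ is a geodesic ray of speed $C$ passing through $\mu=\gamma(1)$ (and $t\mapsto\gamma(t/C)$ is the corresponding unit-speed reparametrisation). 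There is no deep obstacle here: all the transport-theoretic content is already packaged in Lemma~\ref{lift}, and the only points needing a little care are the identification of the lift of $D_\lambda$ with the horizontal dilation $\widehat{D}_\lambda$ (the cancellation of the cross terms) and the degenerate endpoint $\lambda=0$, where $\widehat{D}_0$ is not invertible and one must transport in the reverse direction.
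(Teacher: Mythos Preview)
Your proof is correct and follows the same overall architecture as the paper: reduce to Lemma~\ref{lift} by showing that the Euclidean dilation $D_\lambda$ is optimal on $\mathbb{R}^{2n}$, identify its lift as $\widehat{D}_\lambda$ (via the cancellation $y\cdot(\lambda x)-x\cdot(\lambda y)=0$), and then compute distances along the curve using the semigroup property and a change of variables. The geodesic-ray part, including your careful treatment of the degenerate endpoint $\lambda=0$, matches the paper's computation essentially line by line.

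The genuine difference is in how the optimality of $D_\lambda$ on $\mathbb{R}^{2n}$ is established. The paper verifies cyclical monotonicity directly, which amounts to the inequality
\[
\sum_{i=1}^N |1-\lambda|^p \|q_i\|^p \leq \sum_{i=1}^N \|q_{i+1}-\lambda q_i\|^p,
\]
and spends roughly two pages on it: a reduction to scalars, a proof of the case $N=2$ by differentiating an auxiliary function over two parameter ranges, and then an induction on $N$ with a further case analysis. Your route---the reverse triangle inequality in $\cW_p(\mathbb{R}^{2n})$ applied to $\sigma$, $(D_\lambda)_\#\sigma$, and $\delta_0$, combined with the obvious upper bound from the coupling $(\mathrm{Id}\times D_\lambda)_\#\sigma$---gives the exact value $\dwp(\sigma,(D_\lambda)_\#\sigma)=|1-\lambda|\,\dwp(\sigma,\delta_0)$ in two lines, and this is all Lemma~\ref{lift} needs as input. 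The paper's approach is entirely self-contained at the level of cyclical monotonicity, which is consistent with its general emphasis on Theorem~\ref{Ambrosio-Pratelli}; your argument is considerably shorter and more conceptual, and the citations to \cite{K} and \cite{GTV2} that you give are exactly the ones the paper itself uses elsewhere when discussing Euclidean dilations.
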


\begin{proof} To verify the first claim of the proposition, we shall apply again Lemma \ref{lift} to the Euclidean dilation  $D_{\lambda}: \mathbb{R}^{2n} \to \mathbb{R}^{2n}$ given by $D_{\lambda} (x,y) = (\lambda x, \lambda y)$.  

Therefore, we have to prove first that the Euclidean dilation $D_\lambda$ is an optimal transport map between $\mu$ and $(D_\lambda)_{\#} \mu$ for any $\mu \in \cW_p(\R^{2n}).$ We shall use \cite[Theorem 5.10]{Villani} by defining functions $\psi, \phi$ on $\R^{2n}$ {such that $\phi(x',y')-\psi(x,y)\leq \norm{(x,y)-(x',y')}^p$ for every $(x,y),(x',y') \in \R^{2n},$ with equality whenever $(x',y')=(\lambda x,\lambda y)$}. By symmetry, it is sufficient to prove the optimality of $D_\lambda$ for $\lambda \geq 1.$ 
Let
$$
\psi((x,y)):=(\lambda-1)^{p-1}\norm{(x,y)}^p \qquad ((x,y) \in \R^{2n})
$$
and
$$
\phi((x',y')):=\inf \lers{\psi((x,y))+c((x,y),(x',y')) \, \middle| \, (x,y) \in \R^{2n}}=
$$
$$
=\inf \lers{(\lambda-1)^{p-1}\norm{(x,y)}^p+\norm{(x,y)-(x',y')}^p \, \middle| \, (x,y) \in \R^{2n}}.
$$
One-variable calculus shows that for fixed $(x',y') \in \R^{2n},$ the unique minimizer of the strictly convex function
$$
(x,y) \mapsto (\lambda-1)^{p-1}\norm{(x,y)}^p+\norm{(x,y)-(x',y')}^p
$$
is $(x_0,y_0)=(x',y') /\lambda.$
Therefore the inequality
\begin{equation*}
\begin{split}
\phi((x',y'))-\psi((x,y))&=\min_{(\tilde{x},\tilde{y})} \lers{\psi((\tilde{x},\tilde{y}))+c((\tilde{x},\tilde{y}),(x',y'))}-\psi((x,y))\\
&\leq c((x,y),(x',y')),
\end{split}
\end{equation*}
which is obviously true for all $(x,y), (x',y') \in \R^{2n},$ is saturated whenever $(x',y')=(\lambda x, \lambda y),$ which proves the optimality of $D_\lambda.$

 So we can apply Lemma \ref{lift} to conclude that $\widehat{D}_{\lambda}: \mathbb{H}^n \to \mathbb{H}^n$ is an optimal transport map from $\mu$ to the measure $(\widehat{D}_{\lambda})_{\#}\mu$. This implies that if the measure $\mu$ is not supported on the vertical $0z$-axis, then the curve $\lambda \mapsto \gamma(\lambda) :=(\widehat{D}_{\lambda})_{\#}\mu$ is a geodesic ray containing $\mu= \gamma(1)$. Indeed, to see this, let us consider $0< \lambda_1 < \lambda_2 $. Note that the optimal transport map from $\gamma(\lambda_1)=(\widehat{D}_{\lambda_1})_{\#}\mu $ to $\gamma(\lambda_2)= (\widehat{D}_{\lambda_2})_{\#}\mu$ will be $\widehat{D}_{\frac{\lambda_2}{\lambda_1}}$. We can therefore calculate:
 \begin{equation} \label{HD}
 d^p_{\mathcal{W}_p}(\gamma(\lambda_1), \gamma(\lambda_2)) = \int_{\mathbb{H}^n}d^p_H(q, \widehat{D}_{\frac{\lambda_2}{\lambda_1}}(q)) ~\mathrm{d}\gamma(\lambda_1)(q). 
 \end{equation}
 
 Making the change of variables $q = \widehat{D}_{\lambda_1}(q')$ and using the fact that $\widehat{D}_{\frac{\lambda_2}{\lambda_1}} \circ \widehat{D}_{\lambda_1} = \widehat{D}_{\lambda_2} $ we obtain that the right-hand side of \eqref{HD} will become 
 \begin{equation}
 \begin{split}
 \int_{\mathbb{H}^n}d^p_H(q, \widehat{D}_{\frac{\lambda_2}{\lambda_1}}(q)) ~\mathrm{d}\gamma(\lambda_1)(q)&= 
 \int_{\mathbb{H}^n}d^p_H (\widehat{D}_{\lambda_1}(q'), \widehat{D}_{\lambda_2}(q') )~\mathrm{d}\mu(q')\\
 &= |\lambda_1-\lambda_2|^p \int_{\mathbb{H}^n}||(x,y)||^p ~\mathrm{d}\mu. 
 \end{split}
 \end{equation}
 
 Combining the above relations we obtain:
 
 \begin{equation} \label{geod-ray}
 d^p_{\mathcal{W}_p}(\gamma(\lambda_1), \gamma(\lambda_2)) =|\lambda_1-\lambda_2|^p\int_{\mathbb{H}^n}||(x,y)||^p ~\mathrm{d}\mu. 
 \end{equation}
 
 Let us observe that if $\mu$ is supported on the $0z$-axis, then the integral on the right-hand side vanishes. In this case, $\gamma$ degenerates to a single point. On the other hand, if the support of $\mu$ is not contained in the $0z$-axis then \eqref{geod-ray} can be written equivalently as
  \begin{equation}
 d_{\mathcal{W}_p}(\gamma(\lambda_1), \gamma(\lambda_2)) = C |\lambda_1-\lambda_2|, 
 \end{equation}
 where $C= \Big(\int_{\mathbb{H}^n}||(x,y)||^p ~\mathrm{d}\mu\Big)^{\frac{1}{p}} >0$. 
 \end{proof}

 Let us notice that the endpoint of the above geodesic ray $\gamma(0)$ is vertically supported as its support is contained in the $0z$-axis. The following proposition gives a characterization of vertically supported measures as endpoints of geodesic rays. We remark that dilations have been used also in the Euclidean case to understand the action of an isometry on the set of Dirac masses. An important observation in \cite{K} is that general geodesic rays ending at a Dirac mass cannot be extended past it (see Section 2.3 and Section 6.2 in \cite{K}). The following statement is a replacement of this fact in our Heisenberg setting where, roughly speaking, we replace Dirac masses by vertically supported measures. 

\begin{proposition} \label{vertically-supported}
Let $\mu \in \mathcal{W}_p(\mathbb H^n)$. Then $\mu$ is vertically supported if and only if it has the property that if $\mu$ is contained in a geodesic ray $\gamma$, then either $\mu$ is the endpoint of $\gamma$ or $\gamma$ can be extended to a complete geodesic. 
\end{proposition}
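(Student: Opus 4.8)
The plan is to prove the two implications separately. Throughout I normalise $\gamma$ to unit speed and exploit the projection $\pi\colon\HH^n\to\mathbb{R}^{2n}$, which intertwines the two families of optimal maps with their Euclidean counterparts, $\pi\circ\widehat{D}_\lambda=D_\lambda\circ\pi$ and $\pi\circ\widehat{T}_{tU}=T_{tU}\circ\pi$, where $T_{tU}$ is the Euclidean translation by $t(u,v)$. The elementary but decisive observation is that $\mu$ is vertically supported if and only if $\pi_\#\mu$ is a Dirac mass.

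For the implication ``property $\Rightarrow$ vertically supported'' I argue by contraposition. If $\mu$ is not vertically supported, then $\pi_\#\mu$ is not a Dirac mass and $\supp\mu$ is not contained in the $0z$-axis, so by Proposition \ref{horizontal-dilations} the curve $\gamma(\lambda)=(\widehat{D}_\lambda)_\#\mu$, $\lambda\in[0,\infty)$, is a geodesic ray with $\mu=\gamma(1)$ an interior point. Were it the restriction of a complete geodesic, Proposition \ref{newprop1} would force that geodesic to be a right-translation family $(\widehat{T}_{\lambda U})_\#\rho$, whose horizontal projections $\pi_\#(\widehat{T}_{\lambda U})_\#\rho=(T_{\lambda U})_\#(\pi_\#\rho)$ are rigid translates of a single measure. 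But $\pi_\#\gamma(\lambda)=(D_\lambda)_\#(\pi_\#\mu)$ is a nontrivial dilation family whose shape changes with $\lambda$, since $\pi_\#\mu$ is not a Dirac mass; these behaviours are incompatible. Hence $\gamma$ contains $\mu$ as a non-endpoint and does not extend, so $\mu$ fails the property.

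For the converse, assume $\mu$ is vertically supported; composing with a suitable isometry of $\HH^n$ (which induces a geodesic-ray-preserving isometry of $\Wphn$) I may assume $\supp\mu$ lies on the $0z$-axis. Let $\gamma\colon[a,\infty)\to\Wphn$ be a unit-speed geodesic ray with $\gamma(t_1)=\mu$; if $t_1=a$ then $\mu$ is the endpoint and there is nothing to prove, so suppose $t_1>a$. Following the thread analysis in the proof of Proposition \ref{newprop1}, for the optimal coupling between $\mu$ and $\gamma(r)$ and a point $(q,q_r)$ in its support, the curve $r\mapsto q_r$ is a geodesic of $\HH^n$ through $q\in\supp\mu$. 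Since $q=(0,0,z_q)$ lies on the axis and the geodesics of $\HH^n$ through it are horizontal lines, $q_r=q\ast(r-t_1)U(q)=\big((r-t_1)u(q),(r-t_1)v(q),z_q\big)$ for a horizontal direction $U(q)$. If $U(\cdot)$ equals a single $U$ for $\mu$-a.e.\ $q$, then $\gamma(r)=(\widehat{T}_{(r-t_1)U})_\#\mu$, which is a complete geodesic by Proposition \ref{newprop1}, so $\gamma$ extends.

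The crux is therefore to exclude a non-constant direction field. Because $\mu$ is interior, every thread passes through the axis at time $t_1$: for $s<t_1<t$ the coupling induced by $\gamma$ carries $q^{(i)}_s=(-\sigma U_i,z_i)$ to $q^{(i)}_t=(\tau U_i,z_i)$, where $\sigma=t_1-s$, $\tau=t-t_1$, and $d_H(q^{(i)}_s,q^{(i)}_t)=(\sigma+\tau)\|U_i\|$. Suppose two positive-measure bundles of threads carry distinct directions $U_1\neq U_2$. I would show that for all large $\tau$ interchanging their endpoints strictly lowers the cost, so the optimal coupling between $\gamma(s)$ and $\gamma(t)$ fails to be cyclically monotone, contradicting Theorem \ref{Ambrosio-Pratelli}. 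The interchanged distance $d_H(q^{(1)}_s,q^{(2)}_t)$ has horizontal part $\|\tau U_2+\sigma U_1\|$ and a vertical part of order $\sigma\tau\,\omega(U_1,U_2)$ coming from the symplectic term in the group law, and \emph{the main obstacle is to control this vertical contribution}. The key point is that inside the Korányi norm it enters only at order $\tau^2$ against $\tau^4$ horizontally, so that $d_H(q^{(1)}_s,q^{(2)}_t)=\tau\|U_2\|+\sigma\,(U_1\cdot U_2)/\|U_2\|+O(1/\tau)$. The leading $\tau^p$ terms then cancel, and the $\tau^{p-1}$-coefficient of ``direct minus interchanged'' cost is a positive multiple of $\|U_1\|^p+\|U_2\|^p-(U_1\cdot U_2)\big(\|U_1\|^{p-2}+\|U_2\|^{p-2}\big)$, which by Cauchy--Schwarz is bounded below by $(\|U_1\|-\|U_2\|)(\|U_1\|^{p-1}-\|U_2\|^{p-1})\ge0$ and is strictly positive unless $U_1=U_2$. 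Hence the interchange strictly lowers the cost for large $\tau$, forcing $U(\cdot)$ to be constant, and $\gamma$ extends to a complete geodesic.
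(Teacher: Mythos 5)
Your proof is correct and follows essentially the same route as the paper: the forward direction uses the horizontal-dilation ray together with the classification of complete geodesics from Proposition \ref{newprop1}, and the converse uses the thread/cyclical-monotonicity analysis with a large-$t$ expansion of the Kor\'anyi distance in which the vertical term enters only at order $\tau^{p-2}$ and so does not disturb the first-order coefficient. Your symmetric two-thread computation with Cauchy--Schwarz is just the unnormalized form of the paper's inequality \eqref{Taylor-first-order} specialized to $x=y=0$, where the paper instead normalizes one direction to $(1,0,\ldots,0)$.
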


\begin{proof} 
To prove one of the implications from the statement, assume that  $\mu \in \mathcal{W}_p(\mathbb{H}^n)$ has the property that if it lies on a geodesic ray, then it is either its endpoint or if not, then the geodesic ray can be extended to a complete geodesic. Assume by contradiction that $\mu$ is not vertically supported. For any $t\geq 0$ we consider the horizontal dilations: 
$\widehat{D}_t: \mathbb{H}^n \to \mathbb{H}^n$ defined by $\widehat{D}_t(x,y,z) = (tx,ty, z)$. By applying Proposition \ref{horizontal-dilations} we see that the curve $t \mapsto \gamma(t) :=(\widehat{D}_t)_{\#}\mu$ is a geodesic ray containing $\mu= \gamma(1)$. Since $\mu$ is not vertically supported, it follows that it cannot coincide with the endpoint $\gamma(0)$, which is vertically supported by construction.  According to our assumption on $\mu$, the geodesic ray $\gamma$ can be extended to a complete geodesic. On the other hand, by Proposition \ref{newprop1} the form of complete geodesics implies that if one 
of the measures on the geodesic is vertically supported then all of them must be so. Since $\mu$ is not vertically supported, this gives the desired contradiction concluding the proof of one of the implications. 

To prove the other implication assume that $\mu$ is vertically supported 

and $\mu=\gamma(0)$ for a geodesic ray $\gamma: [0,\infty) \to \Wphn.$ Assume that this $\gamma$ can be extended to an interval $(-a, \infty)$ for some $a>0,$ and let $s\in(0,a).$ We shall proceed in the way similar to the proof of Proposition \ref{newprop1}, and hence we borrow some of the notation from there. Let $(q_0,q_1)$ and $(q_0',q_1')$ be points of the support of $\pi_{01}$ which is an optimal transport plan between $\gamma(0)$ and $\gamma(1),$ let $(u,v,0):=q_0^{-1}*q_1$ and $(u',v',0):=q_0'^{-1}*q_1'$ and let $q_s,q_t, q_s',q_t'$ be defined as in the proof of Proposition \ref{newprop1}. Note that $q_0^{-1}*q_0'$ is a vertical vector as $\mu$ is vertically supported and hence let us denote it by $(0,0,z).$
Let $t>0$ be arbitrary (we will focus on the $t \gg 1$ regime) and let $s \in (0,a)$ be arbitrary but fixed. The cyclical monotonicity of the support of $\pi_{-s,t}$ reads as follows:
\begin{equation} \label{eq:cyclical-mon-ver2} 
d^p_H(q_{-s}, q_t) + d^p_H(q'_{-s}, q'_{t}) \leq d^p_H(q_{-s}, q'_t) + d^p_H(q'_{-s}, q_t).
\end{equation}
The left-hand side of \eqref{eq:cyclical-mon-ver2} is
\be \label{eq:CM-ver2-LHS}
(s+t)^p \ler{\norm{(u,v,0)}_H^p+\norm{(u',v',0)}_H^p},
\ee
while the right-hand side of \eqref{eq:cyclical-mon-ver2} is equal to
\begin{equation*}
\begin{split}
\|q_{-s}^{-1}&* q_t'\|_H^p+\|q_{-s}'^{-1}* q_t\|_H^p=\\
&=\|(su,sv,0)*q_0^{-1}*q_0'*(tu',tv',0)\|_H^p
+\norm{(su',sv',0)*q_0'^{-1}*q_0*(tu,tv,0)}_H^p\\
&=\norm{(su+tu',sv+tv', 2st (v\cdot u'-u\cdot v')+z)}_H^p+\\
&\hspace{1.0cm}+\norm{(su'+tu,sv'+tv, 2st (v'\cdot u-u'\cdot v)-z)}_H^p\\
&=\ler{\ler{\abs{su+tu'}^2+\abs{sv+tv'}^2}^2+\ler{2st(v\cdot u'-u\cdot v')+z}^2}^{\frac{p}{4}}+
\end{split}
\end{equation*}
\begin{equation*}
\begin{split}
&\hspace{1.6cm}+\ler{\ler{\abs{su'+tu}^2+\abs{sv'+tv}^2}^2+\ler{2st(v'\cdot u-u'\cdot v)-z}^2}^{\frac{p}{4}}\\
&\hspace{0.8cm}=\ler{\ler{t^2 \ler{\abs{u'}^2+\abs{v'}^2}+ 2st(u\cdot u'+v\cdot v')+s^2\ler{\abs{u}^2+\abs{v}^2}}^2+\bigO(t^2)}^{\frac{p}{4}}+
\end{split}
\end{equation*}
\be \label{eq:CM-ver2-RHS}
\hspace{2.4cm}+\ler{\ler{t^2 \ler{\abs{u}^2+\abs{v}^2}+ 2st(u'\cdot u+v'\cdot v)+s^2\ler{\abs{u'}^2+\abs{v'}^2}}^2+\bigO(t^2)}^{\frac{p}{4}}.
\ee
Let $(LHS)$ denote the left-hand side of the cyclical monotonicity inequality \eqref{eq:cyclical-mon-ver2} which is computed in \eqref{eq:CM-ver2-LHS}, and let $(RHS)$ denote the right-hand side of \eqref{eq:cyclical-mon-ver2} computed in \eqref{eq:CM-ver2-RHS}. With this notation one gets
\begin{equation*}
\begin{split}
t^{-p}\ler{RHS}&=\ler{\ler{\abs{u'}^2+\abs{v'}^2}^2+4s\ler{\abs{u'}^2+\abs{v'}^2}(u\cdot u'+v \cdot v')\frac{1}{t}+\bigO\ler{\frac{1}{t^2}}}^{\frac{p}{4}}+\\
&\hspace{0.8cm}+\ler{\ler{\abs{u}^2+\abs{v}^2}^2+4s\ler{\abs{u}^2+\abs{v}^2}(u'\cdot u+v' \cdot v)\frac{1}{t}+\bigO\ler{\frac{1}{t^2}}}^{\frac{p}{4}}\\
&=\abs{(u',v')}^p\ler{1+4s \frac{\langle(u,v), (u',v')\rangle}{\abs{(u',v')}^2} \frac{1}{t}+\bigO\ler{\frac{1}{t^2}}}^{\frac{p}{4}}+\\
&\hspace{0.8cm}+\abs{(u,v)}^p\ler{1+4s \frac{\langle(u',v'), (u,v)\rangle}{\abs{(u,v)}^2} \frac{1}{t}+\bigO\ler{\frac{1}{t^2}}}^{\frac{p}{4}}\\
&=\abs{(u',v')}^p+ps\abs{(u',v')}^{p-2}\langle(u,v), (u',v')\rangle \frac{1}{t}+\bigO\ler{\frac{1}{t^2}}+
\end{split}
\end{equation*}
\be \label{eq:rhs-normalized}
\hspace{1.0cm}+
\abs{(u,v)}^p+ps\abs{(u,v)}^{p-2}\langle(u',v'), (u,v)\rangle \frac{1}{t}+\bigO\ler{\frac{1}{t^2}}
\ee
where we used the binomial expansion $(1+x)^\alpha=1+\alpha x+ \bigO(x^2).$ Furthermore,
\begin{equation*}
\begin{split}
t^{-p}(LHS)&=t^{-p}\ler{(s+t)^4\ler{\abs{u}^2+\abs{v}^2}^2}^{\frac{p}{4}}+t^{-p}\ler{(s+t)^4\ler{\abs{u'}^2+\abs{v'}^2}^2}^{\frac{p}{4}}\\
&=\ler{\ler{\abs{u}^2+\abs{v}^2}^2+4s \ler{\abs{u}^2+\abs{v}^2}^2\frac{1}{t}+\bigO\ler{\frac{1}{t^2}}}^{\frac{p}{4}}+\\
&\hspace{1.0cm}+\ler{\ler{\abs{u'}^2+\abs{v'}^2}^2+4s \ler{\abs{u'}^2+\abs{v'}^2}^2\frac{1}{t}+\bigO\ler{\frac{1}{t^2}}}^{\frac{p}{4}}\\
&=\abs{(u,v)}^p\ler{1+4s\frac{1}{t}+\bigO\ler{\frac{1}{t^2}}}^{\frac{p}{4}}
+\abs{(u',v')}^p\ler{1+4s\frac{1}{t}+\bigO\ler{\frac{1}{t^2}}}^{\frac{p}{4}}
\end{split}
\end{equation*}
\be \label{eq:lhs-normalized}
\hspace{0.7cm}=\abs{(u,v)}^p+ps\abs{(u,v)}^p\frac{1}{t}+\abs{(u',v')}^p+ps\abs{(u',v')}^p\frac{1}{t}+\bigO\ler{\frac{1}{t^2}}.
\ee
By the cyclical monotonicity \eqref{eq:cyclical-mon-ver2}, the expression appearing in \eqref{eq:rhs-normalized} must be bounded from below by the expression appearing in \eqref{eq:lhs-normalized} for every $t \in (0, \infty),$ which implies that 
\be \label{eq:CM-v2-corr}
\abs{(u,v)}^p+\abs{(u',v')}^p
\leq
\abs{(u',v')}^{p-2}\langle(u,v), (u',v')\rangle+\abs{(u,v)}^{p-2}\langle(u',v'), (u,v)\rangle.
\ee
However, by the Cauchy-Schwarz inequality,
$$
\abs{(u',v')}^{p-2}\langle(u,v), (u',v')\rangle+\abs{(u,v)}^{p-2}\langle(u',v'), (u,v)\rangle 
$$
\be \label{eq:Cauchy-Schwarz}
\leq \abs{(u',v')}^{p-1}\abs{(u,v)}+\abs{(u',v')}\abs{(u,v)}^{p-1}.
\ee
Furthermore, recall that for $\lambda \in [0,1]$ the $\lambda$-weighted arithmetic mean of the non-negative numbers $\alpha, \beta \in [0, \infty)$ is given by $\alpha \nabla_{\lambda} \beta=(1-\lambda)\alpha+\lambda \beta$ while the $\lambda$-weighted geometric mean of them is given by $\alpha \#_{\lambda}\beta=\alpha^{1-\lambda}\beta^{\lambda}.$ The arithmetic-geometric mean inequality $\alpha\nabla_{\lambda} \beta \geq \alpha \#_{\lambda} \beta$ always holds true, and it is saturated if and only if $\alpha=\beta$ or $\lambda \in \{0,1\}.$
Therefore,
$$
\abs{(u',v')}^{p-1}\abs{(u,v)}+\abs{(u',v')}\abs{(u,v)}^{p-1}
=\abs{(u',v')}^p \#_{\frac{1}{p}}\abs{(u,v)}^p+\abs{(u',v')}^p \#_{\frac{p-1}{p}}\abs{(u,v)}^p
$$
\be \label{eq:AGM-ineq}
\leq \abs{(u',v')}^p \nabla_{\frac{1}{p}}\abs{(u,v)}^p+\abs{(u',v')}^p \nabla_{\frac{p-1}{p}}\abs{(u,v)}^p=\abs{(u',v')}^p+\abs{(u,v)}^p.
\ee
Consequently, in the chain of inequalities \eqref{eq:CM-v2-corr}, \eqref{eq:Cauchy-Schwarz} and \eqref{eq:AGM-ineq}, every inequality is saturated. In particular, the saturation of the arithmetic-geometric mean inequality implies that $\abs{(u',v')}=\abs{(u,v)},$ and the saturation of the Cauchy-Schwarz inequality implies that $(u',v')$ is a nonnegative scalar multiple of $(u,v).$ Alltogether this means that $(u',v')=(u,v).$
\par
That is, $q_0^{-1}*q_1=q_0'^{-1}*q_1'$ and hence $q_0^{-1}*q_1$ is a constant vector when $(q_0,q_1)$ runs over the support of $\pi_{01}$ (let us denote this constant vector by $U$), which implies that $\gamma(1)=\ler{\widehat{T}_U}_{\#}\mu$ and $\gamma(t)=\ler{\widehat{T}_{tU}}_{\#}\mu$ for all $t\in (-a,\infty).$
\color{black}
But this means that this geodesic ray can be extended to the complete geodesic by the same formula for the range of the parameter $t\in (-\infty, \infty)$. This proves the other implication of Proposition \ref{vertically-supported}.
\end{proof}


\section{The metric rank of $\mathcal{W}_p(\mathbb{H}^n)$: Proof of Theorem 1.1.} The goal of this section is to determine the rank of $\Wphn$, that is, the largest $k$ such that $\mathbb{R}^k$ can be embedded isometrically into $\Wphn$. As stated in Theorem \ref{embprop} the rank of $\Wphn$ is equal to $n$. 

Assume first that $k\leq n$. Since the map $\phi:\mathbb{R}^n\to\HH^n$ defined by $\phi(x):=(x,0,0)$ is an isometric embedding, we obtain that $\Phi:\mathbb{R}^n\to\Wphn$ defined by $\Phi(x):=\delta_{\phi(x)}$ is an isometric embedding as well. If $k\leq n$ then $\mathbb{R}^k$ embeds isometrically into $\mathbb{R}^n$, and thus the composition of the two embeddings provides an isometric embedding of $\mathbb{R}^k$ into $\Wphn$.

To see that the rank of $\Wphn$ cannot be larger than $n$, assume by contradiction, that $\Phi:\mathbb{R}^{n+1}\to\Wphn$ is an isometric embedding, and set $\mu:=\Phi(0)$. For $u\in\mathbb{R}^{n+1}$ consider the complete geodesic line $t\mapsto tu$ $(t\in\mathbb{R})$ that is mapped into a complete geodesic
\begin{equation}
    \gamma:\mathbb{R}\to\Wphn;\qquad \gamma(t)=\Phi(tu).
\end{equation}
By Proposition \ref{newprop1} we know that $\gamma(t)=\widehat{T}_{tU}{}_{\#}\mu$ for some non-zero horizontal vector $U\in\HH^n$.
In this way we can define a mapping \begin{equation}
    \psi:\mathbb{R}^{n+1}\to\HH^n,\qquad\psi(u):=U
\end{equation}
with the property that
\begin{equation}
    \Phi(u)={\widehat{T}_{\psi(u)}}{}_{\#}\mu.
\end{equation} Since $\Phi$ is an isometry, we have
$\dwp({\widehat{T}_{\psi(u)}}{}_{\#}\mu,{\widehat{T}_{\psi(v)}}{}_{\#}\mu)=\|u-v\|$ for all $u,v\in\mathbb{R}^{n+1}$. We claim that $\psi^{-1}(u)\ast\psi(v)$ is a horizontal vector in $\HH^n$. To see this, {let $u\neq v$ and} note that $\Phi(v)={\textcolor{black}{\widehat{T}}_{\psi(v)}}_{\#}\mu$ lies on the infinite geodesic
\begin{equation}
    \widetilde{\gamma}:\mathbb{R}\to\Wphn;\qquad \widetilde{\gamma}(t)=\Phi(u+t(v-u)).
\end{equation}
Applying again Proposition \ref{newprop1} we can conclude that 
\begin{equation}\label{TW1}
    \Phi(v)={\widehat{T}_W}{}_{\#}\Phi(u)
\end{equation}
for some horizontal vector $W\in\HH^n$.
So we can rewrite \eqref{TW1} as
\begin{equation}\label{TW3}
{\widehat{T}_{\psi(v)}}{}_{\#}\mu={\widehat{T}_W}{}_{\#}{\widehat{T}_{\psi(u)}}{}_{\#}\mu={\widehat{T}_{(\psi(u)\ast W)}}{}_{\#}\mu.
\end{equation}
\textcolor{black}{Note that $\mu$ is a probability measure on the Polish (that is, complete and separable) metric space $(\HH^n, d_H).$ Therefore, by Ulam's lemma, $\mu$ is tight (in other words, it vanishes at infinity), which means that for any $\varepsilon>0$ there is a compact set $K_\varepsilon \subset \HH^n$ such that $\mu\ler{K_{\varepsilon}}>1-\varepsilon.$ Consequently, there is no non-trivial right-translation for which $\mu$ is invariant. In particular, if ${\widehat{T}_{\psi(v)}}{}_{\#}\mu={\widehat{T}_{(\psi(u)\ast W)}}{}_{\#}\mu$ as obtained in \eqref{TW3}, then}
$\psi(v)=\psi(u)\ast W$, or equivalently, $\psi^{-1}(u)\ast\psi(v)=W$, is a horizontal vector as we claimed. Let us notice that
$${\widehat{T}_{(\psi^{-1}(u)\ast\psi(v)})}{}_{\#}({\widehat{T}_{\psi(u)}}{}_{\#}\mu)={\widehat{T}_{\psi(v)}}{}_{\#}\mu,$$
and since $\psi^{-1}(u)\ast\psi(v)=W$ is horizontal, we get that the map $$q\mapsto \widehat{T}_{(\psi^{-1}(u)\ast\psi(v))}q$$ is the optimal transport map between ${\widehat{T}_{\psi(u)}}{}_{\#}\mu$ and ${\widehat{T}_{\psi(v)}}{}_{\#}\mu.$ Using this observation we can compute the distance $\dwp^p({\widehat{T}_{\psi(u)}}{}_{\#}\mu,{\widehat{T}_{\psi(v)}}{}_{\#}\mu)$ as follows

\begin{equation}
\begin{split}
\dwp^p({\widehat{T}_{\psi(u)}}{}_{\#}\mu,{\widehat{T}_{\psi(v)}}{}_{\#}\mu)&=\int_{\HH_n}d_H^p(q,\widehat{T}_{(\psi^{-1}(u)\ast\psi(v))}q)~d{\widehat{T}_{\psi(u)}}{}_{\#}\mu(q)\\
&=\int_{\HH_n}d_H^p(\widehat{T}_{\psi(u)}q',\widehat{T}_{(\psi^{-1}(u)\ast\psi(v))}\widehat{T}_{\psi(u)}q')~d\mu(q')\\
&=\int_{\HH^n}d_H^p(q'\ast\psi(u),q'\ast\psi(v)~\mathrm{d}\mu(q')\\
&=\int_{\HH^n}d_H^p(\psi(u),\psi(v))~\mathrm{d}\mu(q')\\
&=d_H^p(\psi(u),\psi(v)).
\end{split}
\end{equation}
From here we obtain $d_H(\psi(u),\psi(v))=\|u-v\|$ for all $u,v\in\mathbb{R}^{n+1}$, and thus the map $\psi$ is an isometric embedding of $\mathbb{R}^{n+1}$ into $\HH^n$, which contradicts the fact that $\HH^n$ is purely $(n+1)$-unrectifiable, \textcolor{black}{see \cite{Magnani} or }\cite{AK} for $n=1$ and \cite[Theorem 3]{BF} or \cite[Theorem 1.1]{BHW}. 

Since $\mathbb{R}^k$ embeds into $\mathcal{W}_p(\mathbb{R}^k)$ and $\mathcal{W}_p(\HH^k)$ 
\textcolor{black}{ 
for any $k \in \mathbb{N},$ and for $k\leq n$ the spaces $\mathcal{W}_p(\mathbb{R}^k)$ and $\mathcal{W}_p(\HH^k)$ embed isometrically into $\mathcal{W}_p(\HH^n)$ by the push-forward operations induced by the maps
$$
\R^k \ni (x_1,\dots,x_k) \mapsto (x_1, \dots,x_k, 0, \dots, 0; 0, \dots, 0; 0) \in \HH^n \simeq \R^{2n+1}
$$
and 
$$
\HH^k \ni (x_1,\dots,x_k;y_1,\dots, y_k;z) \mapsto (x_1, \dots,x_k, 0, \dots, 0; y_1, \dots,y_k, 0, \dots, 0; z) \in \HH^n
$$
respectively,} the following is an immediate consequence of \textcolor{black}{Theorem} \ref{embprop}.
\begin{corollary} Either of the spaces $\mathcal{W}_p(\mathbb{R}^k)$, or $\mathcal{W}_p(\HH^k)$ can be embedded isometrically into $\Wphn$ if and only if $k\leq n$.
\end{corollary}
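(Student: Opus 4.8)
The plan is to reduce the statement to two ingredients: the rank computation in Theorem \ref{embprop}, and the elementary fact that an isometric embedding of base spaces induces an isometric embedding of the associated $p$-Wasserstein spaces. No new geometry of $\HH^n$ beyond what has already been used is required.

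For the \emph{only if} direction I would argue contrapositively via Theorem \ref{embprop}. Recall from the preliminaries that $\iota_X\colon X\to\mathcal{W}_p(X)$, $\iota_X(x)=\delta_x$, is an isometric embedding for every complete separable metric space $X$. Moreover the map $\mathbb{R}^k\to\HH^k$, $x\mapsto(x,0,0)$, is an isometric embedding: on the subset $\{(x,0,0):x\in\mathbb{R}^k\}$ the group law has vanishing symplectic term, so the Korányi norm of a difference collapses to the Euclidean norm. Hence $\mathbb{R}^k$ embeds isometrically both into $\mathcal{W}_p(\mathbb{R}^k)$ (directly via $\iota_{\mathbb{R}^k}$) and into $\mathcal{W}_p(\HH^k)$ (via $\iota_{\HH^k}$ precomposed with the above embedding). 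If either $\mathcal{W}_p(\mathbb{R}^k)$ or $\mathcal{W}_p(\HH^k)$ embedded isometrically into $\Wphn$, composing would produce an isometric embedding of $\mathbb{R}^k$ into $\Wphn$, forcing $k\leq n$ by Theorem \ref{embprop}.

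For the \emph{if} direction, fix $k\leq n$. I would first record the general principle: if $j\colon Y\to Z$ is an isometric embedding of complete separable metric spaces, then $j_{\#}\colon\mathcal{W}_p(Y)\to\mathcal{W}_p(Z)$ is an isometric embedding. Indeed, $j_{\#}$ preserves the finite $p$-th moment condition since $d_Z(j(y),j(y'))=d_Y(y,y')$; and for $\mu,\nu\in\mathcal{W}_p(Y)$, every coupling of $j_{\#}\mu$ and $j_{\#}\nu$ is concentrated on $j(Y)\times j(Y)$ because its marginals are, so it equals $(j\times j)_{\#}\Pi$ for a coupling $\Pi$ of $\mu,\nu$ of equal cost; thus $\dwp(j_{\#}\mu,j_{\#}\nu)=\dwp(\mu,\nu)$. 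Applying this with $j$ the subgroup inclusion $\HH^k\hookrightarrow\HH^n$, $(x,y,z)\mapsto((x,0),(y,0),z)$ — an isometric embedding because the group operation and the homogeneous norm only involve the nonzero coordinates — yields $\mathcal{W}_p(\HH^k)\hookrightarrow\Wphn$. Applying it with $j$ the embedding $\mathbb{R}^k\hookrightarrow\HH^n$, $x\mapsto(x,0,0)$, yields $\mathcal{W}_p(\mathbb{R}^k)\hookrightarrow\Wphn$.

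There is no serious obstacle here; the only points needing a line of verification are that the "Lagrangian" copy $\{(x,0,0):x\in\mathbb{R}^k\}$ really sits isometrically in $\HH^n$ (the vanishing of the $y$-coordinates kills the vertical contribution, so the Korányi metric reduces to the Euclidean one) and that the push-forward construction is genuinely distance-preserving, i.e. that a coupling of two measures supported on $j(Y)$ cannot charge the complement of $j(Y)\times j(Y)$ — which is forced by the marginal conditions.
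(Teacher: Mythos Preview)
Your proof is correct and follows the same approach as the paper, which records the corollary as an immediate consequence of Theorem \ref{embprop} via the embedding of $\mathbb{R}^k$ into $\mathcal{W}_p(\mathbb{R}^k)$ and $\mathcal{W}_p(\HH^k)$. You simply supply more detail on the ``if'' direction by spelling out the push-forward principle $j_{\#}\colon\mathcal{W}_p(Y)\hookrightarrow\mathcal{W}_p(Z)$ for isometric $j\colon Y\hookrightarrow Z$, which the paper leaves implicit.
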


\section{Isometric rigidity of $\Wphn$: Proof of Theorem 1.2.
}


Metric projections to vertical lines will play a key role in the proof.  Let us consider the vertical line $L_{\xtyt}=\{(\xt,\yt,z)\,|\,z\in\mathbb{R}\}$ and the metric projection $p_{\xtyt}:\HH^n\to L_{(\widetilde{x},\widetilde{y})}$. Let us calculate first the coordinates of $p_{\xtyt}(q_0)$, where $q_0=(x_0,y_0,z_0)\in\HH^n$. By definition,
\begin{equation*}
\begin{split}
    d_H^p(q_0,p_{\xtyt}(q_0))&=\min_{q'\in L_{(\widetilde{x},\widetilde{y})}}d_H^p(q_0,q')\\
    &=\min_{r\in\mathbb{R}}d_H^p\big((x_0,y_0,z_0),(\xt,\yt,r)\big)\\
    &=\textcolor{black}{\min_{r\in\mathbb{R}}\big\{\big||x_0-\xt|^2+|y_0-\yt|^2\big|^2+\big|z_0-r+2(y_0\cdot\xt-x_0\cdot\yt)\big|^2\big\}^{p/4}.}
\end{split}
\end{equation*}
This expression attains its minimum if and only if $r=z_0+2(y_0\cdot\xt-x_0\cdot\yt)$, so
\begin{equation}\label{projkoord}
p_{\xtyt}(x_0,y_0,z_0)=\big(\xt,\yt,z_0+2(y_0\cdot\xt-x_0\cdot\yt)\big).
\end{equation}
Our aim is to identify finitely supported measures by means of their projections to vertical lines. Inspired by \cite{BK}  we consider a version of the  \emph{Radon transformation} that we call {\it vertical Radon transform} as follows. For a measure $\mu\in\Wphn$ we call the mapping 
\begin{equation}\label{RadonTransform}
\mathcal{R}_{\mu}:\mathcal{L}\to\Wphn;\qquad\mathcal{R}_{\mu}(L_{(\widetilde{x},\widetilde{y})})=(p_{\xtyt})_{\#}\mu   
\end{equation}
the vertical Radon transform of $\mu$.
Let us mention that a similar object, called {\it Heisenberg X-ray transform} was studied in a recent paper by Flynn \cite{Flynn}. The main difference between our version of the Radon transform and the one considered in \cite{Flynn} is that in our case projections onto {\it vertical lines} define the {\it transform of a measure} while in \cite{Flynn} integrals on {\it horizontal lines} are used to define the {\it transform of a function}.

The following lemma says that the vertical Radon transform is injective on the set of finitely supported measures. Before stating this result we should mention that a similar statement in the Euclidean space is clearly false. Indeed, considering the Radon transform associated with the family of lines parallel to a fixed direction in the Euclidean space will never be injective. The fact, that this statement holds true in the Heisenberg setting is due to the "twirling effect" or the horizontal bundle of the Heisenberg group. In our case, this means that metric projections are rapidly turning by changing the location of the vertical line. 

\begin{lemma}\label{injectivity}
    Assume that $\nu=\sum_{i=1}^N m_i\delta_{q_i}\in\Wphn$ is an arbitrary finitely supported measure. If $\mathcal{R}_{\mu}=\mathcal{R}_{\nu}$ for some $\mu\in\Wphn$ then $\mu=\nu$.
\end{lemma}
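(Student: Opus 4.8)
The plan is to recover the measure $\nu$ from its vertical Radon transform $\mathcal{R}_\nu$ by reading off, for each vertical line $L_{\xtyt}$, the pushforward $(p_{\xtyt})_\#\nu = \sum_{i=1}^N m_i\,\delta_{p_{\xtyt}(q_i)}$, and showing that the full family of these atomic measures determines the points $q_i$ and weights $m_i$. The key is formula \eqref{projkoord}: if $q_i=(x_i,y_i,z_i)$ then the third coordinate of $p_{\xtyt}(q_i)$ is the affine function $z_i + 2(y_i\cdot\xt - x_i\cdot\yt)$ of $\xtyt\in\mathbb{R}^{2n}$. So for a fixed source point $q_i$, as the line varies, the atom travels along a genuinely $2n$-parameter affine family, and crucially two distinct source points $q_i\neq q_j$ give affine functions $\xtyt\mapsto z_i+2(y_i\cdot\xt-x_i\cdot\yt)$ and $\xtyt\mapsto z_j+2(y_j\cdot\xt-x_j\cdot\yt)$ that agree only on a proper affine subspace of $\mathbb{R}^{2n}$ (they are identically equal iff $x_i=x_j$, $y_i=y_j$, $z_i=z_j$). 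This is the analytic incarnation of the ``twirling effect'' alluded to before the statement.

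Concretely, I would argue as follows. Suppose $\mathcal{R}_\mu=\mathcal{R}_\nu$ with $\nu=\sum_{i=1}^N m_i\delta_{q_i}$. First, $\mu$ must also be finitely supported with at most $N$ atoms: for \emph{any} vertical line $L$, $(p_L)_\#\mu = (p_L)_\#\nu$ is supported on at most $N$ points of $L$, and since $p_L$ restricted to any fixed vertical line is injective off that line (the projection is a bijection of $\HH^n\setminus L$ onto $L$ only after quotienting... more carefully: $p_L$ is injective on each vertical line $L'\neq L$, in fact it maps $\HH^n$ onto $L$ with fibers being the vertical lines... no — from \eqref{projkoord}, $p_{\xtyt}(x_0,y_0,z_0)$ determines $(x_0,y_0)$ only up to the constraint, so the fiber over a point of $L_{\xtyt}$ is not a single vertical line). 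Let me instead use a cleaner route: write $\mu = \sum_{j=1}^M \ell_j \delta_{p_j}$ after noting $\mu$ is finitely supported (it is a measure all of whose projections are finitely supported by uniformly bounded cardinality $\le N$, and $p_{(0,0)}$, say, is injective on the complement of $L_{(0,0)}$ — one checks from \eqref{projkoord} that $p_{(0,0)}(x_0,y_0,z_0)=(0,0,z_0)$, which has large fibers; so this needs care). The honest statement I would prove is: the map sending $\mu$ to the function $\xtyt\mapsto \big((p_{\xtyt})_\#\mu\big)$ is injective on $\mathcal F(\HH^n)$.

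The main step: assuming $\mu=\sum_j\ell_j\delta_{p_j}$ and $\nu=\sum_i m_i\delta_{q_i}$ have equal Radon transforms, I look at a \emph{generic} line $L_{\xtyt}$, i.e.\ one avoiding the finitely many proper affine subspaces of $\mathbb{R}^{2n}$ on which any two of the affine functions attached to the points $\{p_j\}\cup\{q_i\}$ coincide. For such generic $\xtyt$, all the projected points $p_{\xtyt}(p_j)$, $p_{\xtyt}(q_i)$ are pairwise distinct whenever the underlying source points are distinct. Equality of the two atomic measures on $L_{\xtyt}$ then forces a weight-preserving bijection between $\{p_j\}$ and $\{q_i\}$ that is \emph{locally constant in $\xtyt$} on the (connected, open, dense) generic set; matching a single source point $q_i$ to a single $p_{\sigma(i)}$ on an open set of lines forces the two affine functions $z\mapsto z_i+2(y_i\cdot\xt-x_i\cdot\yt)$ and its counterpart to agree on that open set, hence identically, hence $q_i=p_{\sigma(i)}$; and the weights match. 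Therefore $\mu=\nu$.

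The hard part, and the place to be careful, is twofold: (1) establishing cleanly that $\mu$ is finitely supported (so that ``generic line'' arguments apply) — I would handle this by first showing $\mathrm{supp}\,\mu$ is contained in a finite union of vertical lines or by a compactness/counting argument bounding $\#\mathrm{supp}\,\mu \le N$ using that every projection has at most $N$ atoms together with injectivity of $p_{\xtyt}$ on each vertical line $L'\ne L_{\xtyt}$ (which does hold: if $(x',y',z')$ and $(x',y',z'')$ project to the same point then $z'=z''$); and (2) verifying the genericity reduction, i.e.\ that the ``bad'' set of lines — where two of the finitely many affine functions $F_i(\xtyt)=z_i+2(y_i\cdot\xt-x_i\cdot\yt)$ coincide — is a finite union of proper affine subspaces, hence has dense open complement. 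Both are elementary once set up; the essential geometric input is simply that $(x,y,z)\mapsto (y,-x)$ has trivial kernel, which is exactly the non-degeneracy of the symplectic form defining $\HH^n$ and is what makes the vertical Radon transform injective — in sharp contrast to the Euclidean Radon transform along a fixed direction.
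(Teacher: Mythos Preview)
Your core ideas---the affine third-coordinate functions $F_q(\xt,\yt)=z+2(y\cdot\xt-x\cdot\yt)$, the observation that $F_q\equiv F_{q'}$ iff $q=q'$, and the genericity argument avoiding a finite union of proper affine hyperplanes---are exactly what the paper uses. But there is a genuine gap at the point you yourself flag as ``the hard part (1)'': you never establish that $\mu$ is finitely supported, and your suggested fixes (finite union of vertical lines, counting via injectivity of $p_{\xtyt}$ on vertical lines) do not work as stated. Injectivity of $p_{\xtyt}$ on each vertical line $L'$ only tells you that $\mu$ restricted to any single vertical line has at most $N$ atoms; it says nothing about how many vertical lines meet $\supp(\mu)$. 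And your ``main step'' explicitly assumes $\mu=\sum_j\ell_j\delta_{p_j}$ before you have earned it, so the genericity argument there is circular.

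The fix is small and uses precisely the tool you already built. Do not try to bound $\#\supp(\mu)$ globally. Instead argue point by point: fix any $q\notin\{q_1,\dots,q_N\}$ and apply your hyperplane-avoidance argument to the $(N{+}1)$-point set $\{q,q_1,\dots,q_N\}$. This produces a line $L_{\xtyt}$ on which $p_{\xtyt}(q)$ is distinct from all $p_{\xtyt}(q_i)$. Since $(p_{\xtyt})_\#\mu=(p_{\xtyt})_\#\nu$ is supported on $\{p_{\xtyt}(q_1),\dots,p_{\xtyt}(q_N)\}$, an open neighborhood of $p_{\xtyt}(q)$ has zero $(p_{\xtyt})_\#\mu$-mass; by continuity of $p_{\xtyt}$, the preimage is an open neighborhood of $q$ disjoint from $\supp(\mu)$. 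Hence $\supp(\mu)\subseteq\{q_1,\dots,q_N\}$. This is exactly the paper's argument. Once you know this, your elaborate locally-constant-bijection step is unnecessary: a \emph{single} generic line (for the set $\{q_1,\dots,q_N\}$) already separates all the $p_{\xtyt}(q_i)$, and reading off the atomic weights of $(p_{\xtyt})_\#\mu=(p_{\xtyt})_\#\nu$ gives $\mu(\{q_i\})=m_i$ directly.
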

\begin{proof}
First we show that for any finite set $\{q_1,\dots q_N\}\subset\HH^n$ there exists a vertical line $L_{(\widetilde{x},\widetilde{y})}$ such that the points $p_{\xtyt}(q_1),\dots,p_{\xtyt}(q_N)$ are pairwise different.  
Let us understand first what the equality $p_{\xtyt}(q)=p_{\xtyt}(q')$
 for two fixed distinct points $q=(x,y,z)$ and $q'=(x',y',z')$ tells us about $\xtyt$.
From \eqref{projkoord} we know that $p_{\xtyt}(q)=p_{\xtyt}(q')$ if and only if
\begin{equation}\label{projeq}
    z+2(y\cdot\xt-x\cdot\yt)=z'+2(y'\cdot\xt-x'\cdot\yt).
\end{equation}
 Since $x,y,x',y'$ are all fixed elements of $\mathbb{R}^n$ and $z-z'$ is a fixed constant, \eqref{projeq} is equivalent to the fact that $\xtyt\in\mathbb{R}^{2n}$ belongs to the affine hyperplane in $\mathbb{R}^{2n}$ described by
 \begin{equation}\label{hyperplane}
 (y-y')\cdot\xt+(x'-x)\cdot\yt=\frac{z'-z}{2}.
 \end{equation}
Let us denote this affine hyperplane by $H_{q,q'}$. We see that the requirement 
$$p_{(\xt, \yt)}(q) \neq p_{(\xt, \yt)}(q') $$ 
for a pair of distinct points $q, q'$, excludes exactly one affine hyperplane $H_{q,q'}$ in $\mathbb{R}^{2n}$.  Since we only have finitely many points, we get that 
$$\bigcup_{1\leq i<j\leq N}H_{q_i,q_j}\neq\mathbb{R}^{2n}.$$
But this means, that for any $\xtyt\notin\bigcup_{1\leq i<j\leq N}H_{q_i,q_j}$ the points $p_{\xtyt}(q_1),\dots,p_{\xtyt}(q_N)$ are pairwise different.

Next we show that $\mathcal{R}_{\mu}=\mathcal{R}_{\nu}$ implies $\supp(\mu)\subseteq\{q_1,\dots,q_N\}$. Let $q\in\HH^n$ be a point such that $q\notin\{q_1,\dots,q_N\}$. According to the argument above, we can find a vertical line $L_{(\widetilde{x},\widetilde{y})}$ such that $$q':=p_{\xtyt}(q),~q'_1:=p_{\xtyt}(q_1),~\dots,~q_N':=p_{\xtyt}(q_N)$$ are pairwise different. Since $(p_{\xtyt})_{\#}\nu$ is a finitely supported measure with $$\supp\big((p_{\xtyt})_{\#}\nu\big)=\{q_1',\dots,q_N'\}$$ and $\supp((p_{\xtyt})_{\#}\nu)=\supp((p_{\xtyt})_{\#}\mu)$, we get that $q'\notin\supp((p_{\xtyt})_{\#}\mu)$. In fact, a whole open neighborhood of $q'$ is disjoint from $\supp((p_{\xtyt})_{\#}\mu)$. Since the projection is continuous, we obtain that the inverse image of this open neighborhood is an open neighborhood of $q$, disjoint from $\supp(\mu)$. Since $q\notin\{q_1,\dots,q_N\}$ was arbitrary, we conclude that $\mu$ is finitely supported and $\supp(\mu)\subseteq\supp(\nu)=\{q_1,\dots,q_N\}$.
    
    Next, we show that $\mu=\nu$. Using the same vertical line $\xtyt\in\mathbb{R}^{2n}$ as before, we know that the projections $q_1',\dots, q_N'$ are pairwise different, and that $(p_{\xtyt})_{\#}\mu=(p_{\xtyt})_{\#}\nu$. This implies
$$(p_{\xtyt})_{\#}\mu(q_i')=(p_{\xtyt})_{\#}\nu(q_i')=\nu(q_i)=m_i$$
for all $1\leq i\leq N$. It follows from the choice of $L_{(\widetilde{x},\widetilde{y})}$ and the fact $\supp{\mu}\subseteq\{q_1,\dots,q_N\}$ that the inverse image of $q_i'$ under $(p_{\xtyt})_{\#}$ intersects $\supp(\mu)$ only in $q_i$, and therefore $\mu(q_i)=(p_{\xtyt})_{\#}\mu(q_i)=\nu(q_i)=m_i$ for all $1\leq i\leq N$. This implies that $\mu=\nu$.
\end{proof}

\color{black}
\begin{remark} \label{rem:radon-transform}
Note that if $\mathcal{R}_{\mu}=\mathcal{R}_{\nu}$ for some $\mu, \nu \in \Wphn,$ then by the definition \eqref{RadonTransform} we have $\ler{p_{\xtyt}}_{\#}\mu=\ler{p_{\xtyt}}_{\#}\nu$ for every $\xtyt \in \R^{2n}.$ This implies by the definition of the push-forward operation that $\mu\ler{p_{\xtyt}^{-1}\ler{\lers{\ler{\widetilde{x},\widetilde{y},\widetilde{z}}}}}=\nu\ler{p_{\xtyt}^{-1}\ler{\lers{\ler{\widetilde{x},\widetilde{y},\widetilde{z}}}}}$ for any $\ler{\widetilde{x},\widetilde{y},\widetilde{z}} \in \HH^n.$ Moreover, $p_{\xtyt}^{-1}\ler{\lers{\ler{\widetilde{x},\widetilde{y},\widetilde{z}}}}$ is the Heisenberg left translate of the horizontal hyperplane $\lers{\ler{x,y,z}\,\middle|\, z=0}$ to the point $\ler{\widetilde{x},\widetilde{y},\widetilde{z}}$ --- in other words, it is the horizontal hyperplane with characteristic point $\ler{\widetilde{x},\widetilde{y},\widetilde{z}}.$ Furtheremore, every $2n$-dimensional hyperplane in $\R^{2n+1}$ is a left translate of $\lers{\ler{x,y,z}\,\middle|\, z=0}$ by some vector except for the vertical hyperplanes. Therefore, the assumption $\mathcal{R}_{\mu}=\mathcal{R}_{\nu}$ implies that $\mu$ and $\nu$ agree on all hyperplanes in the Grassmanian $G(2n+1,2n)$ except for the lower dimensional family of vertical hyperplanes. 
\end{remark}
\color{black}

We continue with a brief remark concerning isometries of $p$-Wasserstein spaces over the real line. This remark will play a key role in the sequel.
\begin{remark}\label{real-line}
The Wasserstein space $\mathcal{W}_p(\mathbb{R})$ is isometrically rigid if and only if $p\neq 2$. Indeed, if $0<p<1$, then rigidity follows from \cite[Theorem 4.6]{GTV2}, as the metric $\varrho(x,y):=|x-y|^p$ satisfies the strict triangle inequality. In fact, for any complete and separable metric space $(X,\varrho)$ the $p$-Wasserstein space $\mathcal{W}_p(X)$ is isometrically rigid if $0<p<1$ (see \cite[Corollary 4.7]{GTV2}). As for the $p\geq1$ case: isometric rigidity of $\mathcal{W}_p(\mathbb{R})$ for $p=1$ has been proved in \cite[Theorem 3.7]{GTV1}, for the $p>1,~p\neq 2$ case see \cite[Theorem 3.16]{GTV1}. Finally, Kloeckner showed in \cite[Theorem 1.1.]{K} that $\mathcal{W}_2(\mathbb{R})$ is not isometrically rigid.
\end{remark}

Now we are ready to prove Theorem \ref{isom-rigidity}. Recall that this theorem says that $\Wphn$ is isometrically rigid for all $p>1$ and all $n\in\mathbb{N}$. That is, for any isometry $\Phi:\Wphn\to\Wphn$ there exists an isometry $\psi:\HH^n\to\HH^n$ such that $\Phi=\psi_{\#}.$\\

The strategy of the proof is the following. The first step is to show that isometries preserve the class of vertically supported measures. Then in the second step, we are going to show that measures supported on the same vertical line are mapped to measures that are also supported on the same vertical line. The third step is to show that $\Phi$ maps Dirac masses \textcolor{black}{to} Dirac masses, and thus we can assume without loss of generality that $\Phi(\delta_q)=\delta_q$ for all $q\in\HH^n$. Our aim from that point will be to show that $\Phi$ is the identity. In Step 4 we are going to prove that if  $\Phi$ fixes all Dirac measures then $\Phi$ fixes all vertically supported measures as well. This is the most involved part of the proof as we need to discuss the cases $p\neq 4$ and $p=4$ separately. The case $p=4$ is the difficult one, as in this case, we need an additional argument to rule out the existence of exotic isometries and a nontrivial shape-preserving isometry that maps every measure to its symmetric with respect to its center of mass. Finally, in the last step we are going to use the Radon transform to show that $\Phi$ fixes all finitely supported measures, and thus all elements of $\Wphn$. We remark that this scheme cannot be applied in the $p=1$ case, since it heavily relies on the explicit description of complete geodesics (see Proposition \ref{newprop1}), and such a nice description is not available in the $p=1$ case according to Remark \ref{p=1 geod}.\\

\noindent\underline{Step 1.} 
Let $\mu \in \mathcal{W}_p(\mathbb{H}^n)$ be a vertically supported measure. We are going to show that $\Phi(\mu)$ is also vertically supported. Assume by contradiction that the measure $\nu= \Phi(\mu)$ is not vertically supported. Consider the geodesic ray 
$$\gamma:[0,\infty)\to\Wphn;\qquad\gamma(t)=(\widehat{D}_t)_{\#}\nu$$
that contains $\nu$ but $\nu$ is not its endpoint. Define $\widetilde{\gamma}: [0, \infty) \to \mathcal{W}_p(\mathbb{H}^n)$ as  $\widetilde{\gamma}(t):= \Phi^{-1}\gamma(t)$. This is a geodesic ray containing $\mu= \widetilde{\gamma}(1)$. Since $\mu$ is vertically supported, and it is not the endpoint of the geodesic ray $\widetilde{\gamma}$, applying Proposition \ref{vertically-supported} it follows that $\widetilde{\gamma}$ can be extended to a complete geodesic ray. Taking the image of this extension by $\Phi$ we conclude that also $\gamma$ itself can be extended to a complete geodesic ray. On the other hand, Proposition \ref{newprop1} implies that if a measure on a complete geodesic ray is vertically supported then all measures on the complete geodesic must have this property. This gives a contradiction since $\gamma(0)$ is vertically supported and $\nu= \gamma(1)$ is not.\\

\noindent\underline{Step 2.} Now suppose that $\mu_1, \mu_2 \in \mathcal{W}_p(\mathbb{H}^n)$ are two measures supported on the same vertical line.  Then their image measures $\Phi(\mu_1)$ and $\Phi(\mu_2)$ are also supported on the same vertical line. Indeed, by composing $\Phi$ by isometries of $\mathcal{W}_p(\mathbb{H}^n)$ induced by left translations of $\mathbb{H}^n$ we can assume that $\mu_1, \mu_2$ and also $\Phi(\mu_1)$ are supported on the $0z$-axis. Assume by contradiction that $\Phi(\mu_2)$ is supported on the vertical line $L_{(u,v)}$, where $U:=(u,v,0)\in\HH^n$ is a non-zero horizontal vector. Consider the complete geodesic 
$$\gamma:\mathbb{R} \to \mathcal{W}_p(\mathbb{H}^n);\qquad\gamma(t) :=(\widehat{T}_{tU})_{\#}\textcolor{black}{\Phi}(\mu_2).$$
Note that since $\Phi(\mu_2)$ is supported on the vertical line $L_{(u,v)}$, therefore $\gamma(-1)=(\widehat{T}_{-U})_{\#} \textcolor{black}{\Phi}(\mu_2)$ is supported on the $0z$ axis. Denoting by $\nu= \gamma(-1)$ we can also write $\widetilde{\gamma}(t) = (\widehat{T}_{tU})_{\#} \nu$ as a reparametrisation of $\gamma$.
 
For two non-empty subsets $A, B \subset \mathcal{W}_p(\mathbb{H}^n)$, the distance between $A$ and $B$ in $\Wphn$ is defined as  

$$\mbox{dist}_{d_{\mathcal{W}_p}}(A,B) := \inf \{ d_{\mathcal{W}_p}(\mu,\nu): \mu \in A, \nu \in B \}.$$
	
Let us observe that if  $\Phi: \mathcal{W}_p(\mathbb{H}^n) \to \mathcal{W}_p(\mathbb{H}^n)$ is  an isometry, then we have the equality 
	$$ \mbox{dist}_{d_{\mathcal{W}_p}}(A,B) = \mbox{dist}_{d_{\mathcal{W}_p}}(\Phi(A),\Phi(B)), \ \text{for all} \ A, B \subseteq \mathcal{W}_p(\mathbb{H}^n) .$$

In the following, we claim that if  $\mu_1, \mu_2 \in \mathcal{W}_p(\mathbb{H}^n)$ are two measures supported on the vertical axis and $U=(u,v,0)$ is a horizontal vector, then the following inequality holds:
\begin{equation} \label{vertical-ineq}
d_{\mathcal{W}_p}(\mu_1, \mu_2) \leq d_{\mathcal{W}_p}(\mu_1, (\widehat{T}_U)_{\#}\mu_2) ,
\end{equation}
with equality \textcolor{black}{holding} if and only if $U = (0,0,0)$. 

 To prove the above claim, let us observe first that for any $q= (0,0,z)$ and $q'=(u, v, z') $ we have the inequality 
$$ d^p_H(q, \widehat{T}_{(-U)}q') = |z-z'| ^{\frac{p}{2}}\leq \{\textcolor{black}{(|u|^2+|v|^2)^2} + (z-z')^2 \}^{\frac{p}{4}} = d^p_H(q, q').$$

Let $\widetilde{\Pi}$ be an arbitrary coupling of $\mu_1$ and  $(\widehat{T}_U)_{\#}\mu_2$. Define $\widehat{\Pi}:= (\widehat{\mathrm{Id}}\times \widehat{T}_{(-U)})_{\#} \widetilde{\Pi}$. It is easy to see that $\widehat{\Pi}$ is a coupling of $\mu_1$ and $\mu_2$. Then we have:

\begin{equation}
\begin{split}
d^p_{\mathcal{W}_p}(\mu_1, \mu_2) & \leq \int_{\mathbb{H}^n\times \mathbb{H}^n}d_H^p(q,r) ~\mathrm{d}\widehat{\Pi}(q,r)\\
&=\int_{\mathbb{H}^n\times \mathbb{H}^n}d_H^p(q,\widehat{T}_{(-U)} q') ~\mathrm{d}\widetilde{\Pi}(q,q')\\
  &\leq \int_{\mathbb{H}^n\times \mathbb{H}^n}d_H^p(q, q') ~\mathrm{d}\widetilde{\Pi}(q,q').
\end{split}
\end{equation}
Taking the infimum over all couplings $\widetilde{\Pi}$ of $\mu_1$ and $(T_U)_{\#}\mu_2$ we conclude the proof of the claim.

\textcolor{black}{On the one hand,} according to \eqref{vertical-ineq}, \textcolor{black}{and since $U=(u,v,0)\neq(0,0,0)$ by assumption} we can write 
\begin{equation} \label{geod-distance}
D:= \mbox{dist}_{d_{\mathcal{W}_p}}( \gamma, \Phi(\mu_1)) = \mbox{dist}_{d_{\mathcal{W}_p}}( \widetilde{\gamma}, \Phi(\mu_1)) = 
d_{\mathcal{W}_p}( \nu, \Phi(\mu_1)) < d_{\mathcal{W}_p}( \Phi(\mu_2), \Phi(\mu_1)) .
\end{equation}

On the other hand, since $\Phi^{-1}$ is also an isometry, we have 
\begin{equation} \label{geod-distance-inverse}
D= \mbox{dist}_{d_{\mathcal{W}_p}}( \Phi^{-1}(\gamma), \mu_1) .
\end{equation}

Since $\Phi^{-1}(\gamma)$ is a complete geodesic through $\mu_2$ and since both $\mu_1$ and $\mu_2$ lie on the $0z$- axis we can apply again \eqref{vertical-ineq} to obtain
\begin{equation} \label{geod-distance-inverse2}
D= \mbox{dist}_{d_{\mathcal{W}_p}}( \Phi^{-1}(\gamma), \mu_1) = d_{\mathcal{W}_p}( \mu_2, \mu_1) .
\end{equation}
	It is clear that relations \eqref{geod-distance-inverse} and \eqref{geod-distance-inverse2} are in contradiction, finishing the proof of the second step.\\
 
\noindent\underline{Step 3.} From the first step it follows that Dirac masses are mapped to vertically supported measures. Let us consider all measures supported on a given fixed vertical line. According to the previous step their images lie on the same vertical line. Without loss of generality, we can assume that both vertical lines are the $0z$-axis. On the other hand, the Heisenberg metric squared restricted to the $0z$-axis coincides with the usual metric on the real line $\mathbb{R}$, which implies that if we restrict the isometry $\Phi$ on the measures with support on the $0z$-axis then it will coincide with an isometry $F$  of $\mathcal{W}_{\frac{p}{2}}(\mathbb{R})$. According to Remark \ref{real-line} the Wasserstein space $\mathcal{W}_{\frac{p}{2}}(\mathbb{R})$ is isometrically rigid except for the case $p=4$. The $p=4$ corresponds to the $\mathcal{W}_2(\mathbb{R})$ case, where we do not have rigidity, but Dirac masses are preserved by isometries even in that case. This means in particular that $F$ maps Dirac masses to Dirac masses and so does $\Phi$ as well. 

We can conclude that $\Phi(\delta_q)= \delta_{\psi(q)}$ for some mapping $\psi: \mathbb{H}^n \to \mathbb{H}^n$.
	Furthermore, if we pick two points $q_1, q_2 \in \mathbb{H}^n$ then we have 
 \begin{equation*}
 \begin{split}
d_H(q_1, q_2) &=  d_{\mathcal{W}_p}( \delta_{q_1}, \delta_{q_2})\\
&= d_{\mathcal{W}_p}( \Phi(\delta_{q_1}), \Phi(\delta_{q_2}))\\
&=d_{\mathcal{W}_p}( \delta_{\psi(q_1)}, \delta_{\psi(q_2)})\\
&= d_H(\psi(q_1), \psi(q_2)),
\end{split}
 \end{equation*}
which shows that $\psi: \mathbb{H}^n \to \mathbb{H}^n$ is an isometry. Since $\Phi=\psi_{\#}$ if and only if $\Phi\circ\psi_{\#}^{-1}= \mathrm{Id}$, we can assume without loss of generality that $\Phi(\delta_q)=\delta_q$ for all $q\in\HH^n$. Our aim now is to show that $\Phi(\mu)=\mu$ for all $\mu\in\Wphn$.\\

\noindent\underline{Step 4.} Recall that if $L$ is a vertical line then $\mathcal{W}_p(L)$ denotes the set of those $\mu\in\Wphn$ such that $\supp\mu\subseteq L$. We know that measures supported on the same vertical line are mapped into measures supported on the same (possibly different) line. In our case, since Dirac measures are fixed, we get that $\Phi|_{\Wpl}:\Wpl\to\Wpl$ is a bijection, in fact, it coincides with an isometry of $\mathcal{W}_{\frac{p}{2}}(\mathbb{R})$. Since $\mathcal{W}_{\frac{p}{2}}(\mathbb{R})$ is isometrically rigid if $p \neq 4$, since Dirac measures are fixed, we conclude that all measures supported on $L$ will be fixed. Since $L$ is arbitrary we can conclude that all vertically supported measures will be fixed by $\Phi$ if $p\neq4$.

If $p=4$ then $\Phi(\delta_q)=\delta_q$ for all $q\in L$ itself does not imply $\Phi(\mu)=\mu$ for all $\mu\in\mathcal{W}_4(L)$, as $\mathcal{W}_4(L)$ corresponds to $\mathcal{W}_{\frac{4}{2}}(\mathbb{R})$ and Kloeckner showed that $\mathcal{W}_2(\mathbb{R})$ admits \textcolor{black}{non-trivial isometries that fix all Dirac measures and are different from the identity}. Our purpose is to rule out these isometries. This boils down to investigating the action of the isometry on vertically supported measures whose support consists of two points. To keep the presentation precise, let us introduce some notations. 
For a fixed vertical line $L_{(x,y)}$ the set of measures supported on two points of $L_{(x,y)}$ will be denoted by $\Delta_2^{(x,y)}$
\begin{equation}\label{delta2xy}
    \Delta_2^{(x,y)}=\{\alpha\delta_q+(1-\alpha)\delta_{q'}\,|\, \alpha\in(0,1),\, q,q'\in L_{(x,y)}\}.
\end{equation}
Following the notations in  Kloeckner's paper \cite{K}, elements of $\Delta_2^{(x,y)}$ will be parametrized by three parameters $m\in\mathbb{R}$, $\sigma\geq0$, and $r\in\mathbb{R}$ as follows:
\begin{equation}\label{parametrization}
\mu_{(x,y)}(m,\sigma,r)=\frac{e^{-r}}{e^r+e^{-r}}\delta_{(x,y,m-\sigma e^r)}+\frac{e^{r}}{e^r+e^{-r}}\delta_{(x,y,m+\sigma e^{-r})}.
\end{equation}
\textcolor{black}{A very important property of $\Delta_2^{(x,y)}$ is that its geodesic convex hull is dense in $\mathcal{W}_4(L_{(x,y)})$, which is again, can be identified with $\mathcal{W}_2(\mathbb{R})$. Therefore, if an isometry is given, it is enough to know its action on $\Delta_2^{(x,y)}$. According to Lemma 5.2 in \cite{K}, if an isometry $\Phi$ acting on $\mathcal{W}_4(\HH^n)$ fixes all Dirac masses, then for every $(x,y)\in \R^{2n},$ its restriction $\Phi_{(x,y)}:=\Phi|_{\mathcal{W}_4(L_{(x,y)})}$ admits the following form:
$$
\Phi_{(x,y)}:\mathcal{W}_4(L_{(x,y)})\to\mathcal{W}_4(L_{(x,y)}),\quad\Phi_{(x,y)}\big(\mu_{(x,y)}(m,\sigma,r)\big):=\mu_{(x,y)}(m,\sigma,\varphi_{(x,y)}(r))$$
where $\varphi_{(x,y)}:\R\to\R$ is an isometry. This means that $\Phi_{(x,y)}$ can be equal to the exotic isometry}
\begin{equation}\label{eq:Phit}
\textcolor{black}{\Phi_{(x,y)}^{(t)}:\mathcal{W}_4(L_{(x,y)})\to\mathcal{W}_4(L_{(x,y)}),\quad \textcolor{black}{\Phi_{(x,y)}^{(t)}} \big(\mu_{(x,y)}(m,\sigma,r)\big):=\mu_{(x,y)}(m,\sigma,r+t)}
\end{equation}
\textcolor{black}{for some $t\neq0$, that is, $\varphi_{(x,y)}$ is the translation of the real line by $t;$ or $\Phi_{(x,y)}$ is equal to the shape-preserving isometry}
\begin{equation}\label{eq:Phistar}
\textcolor{black}{\Phi_{(x,y)}^*:\mathcal{W}_4(L_{(x,y)})\to\mathcal{W}_4(L_{(x,y)}),\quad \Phi_{(x,y)}^* \big(\mu_{(x,y)}(m,\sigma,r)\big):=\mu_{(x,y)}(m,\sigma,-r),}
\end{equation} 
\textcolor{black}{that is, $\varphi_{(x,y)}$ is the reflection of $\R$ with center $0$; or $\Phi_{(x,y)}$ is the composition of the above two, $\Phi_{(x,y)}^{(t)}\circ\Phi_{(x,y)}^*$ for some $t\neq0$. A priory, it could happen that for different $(x,y)$ the action of $\Phi_{(x,y)}$ are different. The next argument will show that this is not the case, and these actions are uniform in the sense that $\Phi_{(x,y)}$ has the same form as $\Phi_{(0,0)}$ for all $(x,y)$.}

\textcolor{black}{Observe that any isometry $\Phi$ commutes with the push-forward induced by the metric projection $p_{(x,y)_{\#}}$ for all $(x,y)\in\mathbb{R}^{2n}$:
\begin{equation}\label{commutation}
    {p_{(x,y)}}_{\#}\big(\Phi(\mu)\big)=\Phi\big({p_{(x,y)}}_{\#}\mu\big)\quad\mbox{\textcolor{black}{for all $\mu\in\mathcal{W}_4(\HH)$}}.
\end{equation}
Indeed, by definition, 
\begin{equation*}
{p_{(x,y)}}_{\#}\big(\Phi(\mu)\big)=\arg\min_{\eta}\Big\{\dwp(\eta,\Phi(\mu))\,\Big|\,\eta\in\mathcal{W}_{\textcolor{black}{4}}(L_{(x,y)})\Big\}.
\end{equation*}
Assume by contradiction that there exists an $\eta\in\mathcal{W}_{\textcolor{black}{4}}(L_{(x,y)})$ such that 
$$d_{\mathcal{W}_{\textcolor{black}{4}}}(\eta,\Phi(\mu))<d_{\mathcal{W}_{\textcolor{black}{4}}}(\Phi\big({p_{(x,y)}}_{\#}\mu),\Phi(\mu)).$$
Since $\Phi|_{\mathcal{W}_{\textcolor{black}{4}}(L_{(x,y)})}$ is bijective, there exists an $\eta'\in\mathcal{W}_{\textcolor{black}{4}}(L_{(x,y)})$ such that $\Phi(\eta')=\eta$, and thus,
$$d_{\mathcal{W}_{\textcolor{black}{4}}}(\Phi(\eta'),\Phi(\mu))<d_{\mathcal{W}_{\textcolor{black}{4}}}(\Phi\big({p_{(x,y)}}_{\#}\mu),\Phi(\mu)).$$
Since $\Phi$ is an isometry, this is equivalent to
$$d_{\mathcal{W}_{\textcolor{black}{4}}}(\eta',\mu)<d_{\mathcal{W}_{{4}}}({p_{(x,y)}}_{\#}\mu,\mu),$$
a contradiction. This guarantees that $\Phi$ acts uniformly on all vertical lines. Now, we turn to prove that the restriction of $\Phi$ to any vertical lines cannot be one of the above non-trivial isometries.}

\textcolor{black}{First assume that $\Phi_{(x,y)}=\Phi|_{\mathcal{W}_4(L_{(x,y)})}$ is equal to $\Phi_{(x,y)}^{(t)}$ for some $t\in\mathbb{R}$. Our goal is to show that $t=0$ by showing that $\Phi$ cannot be isometric otherwise.}

Recall that the projection of $(x,y,z)\in\HH^n$ onto the vertical line $L_{(\widetilde{x},\widetilde{y})}$ is
$$p_{(\widetilde{x},\widetilde{y})}(x,y,z)=\big(\widetilde{x},\widetilde{y},z+2(y\cdot\widetilde{x}-x\cdot\widetilde{y})\big),$$
and the inverse image of a point \textcolor{black}{$(\widetilde{x},\widetilde{y},\widetilde{z})$ by the projection $p_{(\xt,\yt)}$ is the horizontal hyperplane with characteristic point $(\widetilde{x},\widetilde{y},\widetilde{z}),$ that is,}
\begin{equation}\label{inverse image}
    p_{(\widetilde{x},\widetilde{y})}^{-1}(\xt,\yt,\zt)=\big\{(x,y,z)\,|\,\zt=z+2(y\cdot\xt-x\cdot\yt)\big\}.
\end{equation}
Let us consider the measure $\mu=\frac{1}{2}\big(\delta_{(u,0,0)}+\delta_{(0,u,0)}\big)$, where $u=(1,0,\dots,0)\in\mathbb{R}^n$ and $0=(0,\dots,0)\in\mathbb{R}^n$. 
First we show that $\supp\big(\Phi(\mu)\big)\subseteq H_{[z=0]}=\{(x,y,0)\in\HH^n\,|\,(x,y)\in\mathbb{R}^{2n}\}$. Since ${p_{(0,0)}}_{\#}\mu=\delta_{(0,0,0)}$ and \textcolor{black}{$\Phi_{(0,0)}^{(t)}$} fixes Dirac masses, it follows from \eqref{commutation} that \begin{equation}
\delta_{(0,0,0)}=\textcolor{black}{\Phi_{(0,0)}^{(t)}}(\delta_{(0,0,0)})={\Phi(p_{(0,0)}}_{\#}\mu)={p_{(0,0)}}_{\#}(\Phi(\mu))
\end{equation}
and thus, $\supp\big(\Phi(\mu)\big)\subseteq p_{(0,0)}^{-1}\{(0,0,0)\}=H_{[z=0]}.$

Now let us consider the projection onto $L_{(u,u)}$. Since 
$p_{(u,u)}(u,0,0)=(u,u,-2)$ and
$p_{(u,u)}(0,u,0)=(u,u,2)$,
we get that ${p_{(u,u)}}_{\#}\mu=\mu_{(u,u)}(0,2,0)$, and thus $\Phi({p_{(u,u)}}_{\#}\mu)=\mu_{(u,u)}(0,2,t)$. Again, using that $\Phi$ commutes with projections we get:
\begin{equation}\label{eq:invproj1}
\supp\big(\Phi(\mu)\big)\subseteq\left(p_{(u,u)}^{-1}(u,u,-2e^t)\cup{p_{(u,u)}^{-1}(u,u,2e^{-t})}\right).
\end{equation}
Combining this with $\supp\big(\Phi(\mu)\big)\subseteq H_{[z=0]}$ we obtain that
\begin{equation}\label{proj1}
    \supp\big(\Phi(\mu)\big)\subseteq\{(x,y,0)\,|\,y_1=x_1-e^{t}\}\cup\{(x,y,0)\,|\,y_1=x_1+e^{-t}\}=S_+
\end{equation}
where $x_1$ and $y_1$ are the first coordinates of $x\in\mathbb{R}^n$ and $y\in\mathbb{R}^n$, respectively.

The same calculation with the projection $p_{(-u,-u)}$ leads to $$\Phi({p_{(-u,-u)}}_{\#}\mu)=\mu_{(-u,-u)}(0,2,t),$$ and thus
\begin{equation}\label{eq:invproj2}
\supp\big(\Phi(\mu)\big)\subseteq\left(p_{(-u,-u)}^{-1}(-u,-u,-2e^t)\cup{p_{(-u,-u)}^{-1}(-u,-u,2e^{-t})}\right).
\end{equation}

The intersection of this set with $H_{[z=0]}$ is again the union of two affine hyperplanes in $\mathbb{R}^{2n}$, which leads to:
\begin{equation}\label{proj2}
    \supp\big(\Phi(\mu)\big)\subseteq\{(x,y,0)\,|\,y_1=x_1+e^{t}\}\cup\{(x,y,0)\,|\,y_1=x_1-e^{-t}\}=S_-.
\end{equation}
We see that $\supp\big(\Phi(\mu)\big)\subseteq S_+$ and  $\supp\big(\Phi(\mu)\big)\subseteq S_-$, but $S_+$ and $S_-$ are disjoint unless $t=0$.
\par
Now we have to exclude the \textcolor{black}{case that $\Phi_{(x,y)}=\Phi|_{\mathcal{W}_4(L_{(x,y)})}=\Phi_{(x,y)}^*$ for all $(x,y) \in \R^{2n},$ that is, $\Phi$ acts by the shape-preserving action defined in \eqref{eq:Phistar} on every vertical line}. The technique we use to show that \textcolor{black}{such a $\Phi$} cannot be isometric is similar to the approach we used above to prove that \textcolor{black}{$\Phi$ is not isometric if $\Phi_{(x,y)}=\Phi|_{\mathcal{W}_4(L_{(x,y)})}=\Phi_{(x,y)}^{(t)}$ for all $(x,y) \in \R^{2n}$ for some $t\neq0$.}
Consider the measure
$$
\mu=\frac{1+\alpha}{2} \delta_{(u,0,0)}+\frac{1-\alpha}{2} \delta_{(0,u,0)} \qquad (\alpha \in (0,1)).
$$
The projection onto $L_{(u,0)}$ is
$$\ler{p_{(u,0)}}_{\#} \mu=\frac{1+\alpha}{2} \delta_{(u,0,0)}+\frac{1-\alpha}{2} \delta_{(u,0,2)}.$$
The mean of $\ler{p_{(u,0)}}_{\#} \mu$ is $1-\alpha$, and the reflection to $1-\alpha$ sends $0$ to $2(1-\alpha)$ and sends $2$ to $-2\alpha$, thus
$$
\ler{\textcolor{black}{\Phi_{(u,0)}^*} \circ \ler{p_{(u,0)}}_{\#}} \mu=\frac{1+\alpha}{2} \delta_{(u,0,2(1-\alpha))}+\frac{1-\alpha}{2} \delta_{(u,0,-2 \alpha)}.
$$
Very similarly,
$$
\ler{\textcolor{black}{\Phi_{(0,u)}^*} \circ \ler{p_{(0,u)}}_{\#}} \mu=\frac{1+\alpha}{2} \delta_{(0,u,-2\alpha))}+\frac{1-\alpha}{2} \delta_{(\textcolor{black}{0,u},-2 (1+\alpha)},
$$
and
$$
\ler{\textcolor{black}{\Phi_{(u,u)}^*} \circ \ler{p_{(u,u)}}_{\#}} \mu=\frac{1+\alpha}{2} \delta_{(u,u,-4\alpha+2))}+\frac{1-\alpha}{2} \delta_{(u,u,-4\alpha-2)}.
$$
Considering the pre-images of the projections $p_{(u,0)}, p_{(0,u)},$ and $p_{(u,u)}$ --- very similarly to the computations between \eqref{eq:invproj1} and \eqref{proj2} --- we get that the support of $\textcolor{black}{\Phi(\mu)}$ is a subset of $\{(\alpha u,(1-\alpha)u,0),((1+\alpha)u,-\alpha u,0)\}$, and taking the weights into consideration, there is no other possibility than
$$
\textcolor{black}{\Phi(\mu)}=\frac{1+\alpha}{2}\delta_{(\alpha u,(1-\alpha) u,0)}+\frac{1-\alpha}{2}\delta_{((1+\alpha) u,-\alpha u,0)}.
$$
However,
$$
d_{\cW_4}^4\ler{\textcolor{black}{\Phi(\mu)},\delta_{(0,0,0)}}=\frac{1+\alpha}{2}\ler{\alpha^2+(1-\alpha)^2}^2+\frac{1-\alpha}{2} \ler{(1+\alpha)^2+(-\alpha)^2}^2
$$
which takes, e.g., the value $7/4$ for $\alpha=\pm 1/2,$ while $d_{\cW_4}^4\ler{\mu,\delta_{(0,0,0)}}=1$ is clear for all values of $\alpha.$ We concluded that $\textcolor{black}{\Phi}$ is not an isometry. 
\par 
Finally, we have to exclude the third possibility, that is, when $\Phi_{(x,y)}=\Phi_{(x,y)}^{(t)}\circ\Phi_{(x,y)}^{*}$ for some $t\neq0$ for all $(x,y)$. Observe that the measures $\mu=\frac{1}{2}\big(\delta_{(u,0,0)}+\delta_{(0,u,0)}\big)$ and $\delta_{(0,0,0)}$ are fixed points of $\Phi^*$, and thus the argument above showing that $\Phi_{(x,y)}^{(t)}$ is not isometric for $t\neq0$ shows that $\Phi_{(x,y)}^{(t)}\circ \Phi^*$ is not isometric either. Now we know that an isometry $\Phi: \mathcal{W}_4(\HH^n) \to \mathcal{W}_4(\HH^n)$ fixing all Dirac masses acts identically on measures supported on vertical lines even in the $p=4$ case.


\noindent\underline{Step 5.} In order to prove isometric rigidity, it is enough to show that $\Phi(\mu)=\mu$ for all $\mu\in\mathcal{F}(\HH^n)$, \textcolor{black}{where $\mathcal{F}(\HH^n)$ denotes the set of finitely supported measures. Let $\mu\in\mathcal{F}(\HH^n)$} and $\xtyt\in\mathbb{R}^{2n}$ an arbitrary vector and the associated vertical line $L_{(\widetilde{x},\widetilde{y})}$.
Since $\Phi$ fixes vertically supported measures and commutes with the push-forward induced by projections onto vertical lines we obtain:
$$ (p_{(\xt,\yt)})_{\#}\Phi(\mu)= \Phi((p_{(\xt,\yt)})_{\#}\mu)=(p_{(\xt,\yt)})_{\#}\mu.$$

Since $L_{(\widetilde{x},\widetilde{y})}$ was arbitrary, this implies that $\mathcal{R}_{\mu}=\mathcal{R}_{\Phi(\mu)}$. Since $\mu$ is finitely supported, Lemma \ref{injectivity} implies that $\Phi(\mu)=\mu$.
\medskip

\paragraph*{{\bf Acknowledgements}} 
We thank the anonymous reviewers for their insightful suggestions and comments. In particular, Remark \ref{rem:radon-transform} is based completely on their observations.
This work was initiated at the thematic semester  ``Optimal transport on quantum structures'' (Fall 2022) at the Erd\H{o}s Center, R\'enyi Institute, Budapest. Zolt\'an M. Balogh would like to thank the R\'enyi Institute for the invitation to participate in this event and for the kind hospitality. 
We are grateful to Nicolas Juillet and Katrin F\"assler for their inputs and for several discussions on the topic.



\begin{thebibliography}{99}


\bibitem{AG}
{L. Ambrosio, N. Gigli}, 
{A user’s guide to optimal transport. Modelling and optimisation of
flows on networks}, 
\newblock{\textit{Lecture Notes in Math.}, 2062, Fond. CIME/CIME Found. Subser., Springer, Heidelberg, 2013.}

\bibitem{AK} L. Ambrosio, B. Kirchheim, Rectifiable sets in metric and Banach spaces, \textit{Math. Ann.} 318 (2000), 527--555. 
 
\bibitem{AP} L. Ambrosio, A. Pratelli, Existence and stability results in the $L^1$ theory of optimal transportation, \textit{Lecture Notes in Mathematics "Optimal transportation and applications''} (CIME Series, Martina Franca, 2001) {1813}, L.A. Caffarelli and S. Salsa Eds., 123--160, 2003.
			

\bibitem{AR} L. Ambrosio, S. Rigot, Optimal mass transportation in the Heisenberg group, \textit{J. Funct. Anal.} 208 (2004), 261--301. 

\bibitem{BF} Z. M. Balogh, K. F\"{a}ssler, Rectifiability and Lipschitz extensions into the Heisenberg group, \textit{Math. Z.} 263 (2009), 673--683.
   
\bibitem{BFS} Z. M. Balogh, K. F\"{a}ssler, H. Sobrino, Isometric embeddings into Heisenberg groups, \textit{Geom. Dedicata
}, 185 (2018), 163--192. 

\bibitem{BHW} Z. M. Balogh, P. Haj{\l}asz, K. Wildrick, Weak Contact Equations for Mappings into Heisenberg Groups,
\textit{Indiana Univ. Math. J.
}, Vol. 63, No. 6 (2014), 1839--1873.

\bibitem{BKS} Z. M. Balogh, A. Krist\'{a}ly, K. Sipos, Geometric interpolation inequalities on Heisenberg groups, \textit{Comptes. Rendus. Math. Acad. Sci. Paris} 354 (2016), 916--919. 

\bibitem{BKS1} Z. M. Balogh, A. Krist\'{a}ly, K. Sipos, Geometric inequalities on Heisenberg groups, \textit{Calc. Var. PDE} 57 (2018), 61. 


\bibitem{BTV} Z. M. Balogh, T. Titkos, D. Virosztek, Isometric rigidity of the Wasserstein space $\mathcal{W}_1(\mathbb{G})$ over Carnot groups, \textcolor{black}{arXiv:2305.05492 (2023)}


\bibitem{BK} J. Bertrand, B. Kloeckner, A geometric study of Wasserstein spaces: isometric rigidity in negative curvature,
\textit{Int. Math. Res. Notices}  5 (2016), 1368--1386.

\bibitem{BK2}
J. Bertrand, B. Kloeckner, A geometric study of Wasserstein spaces: Hadamard spaces, \textit{J. Top. Anal.} Vol. 04 (2012), 515--542.

\bibitem{S-R2}
M. Che, F. Galaz-Garc\'ia, M. Kerin, and J. Santos-Rodr\'iguez, Isometric rigidity of metric constructions with respect to Wasserstein spaces, arXiv manuscript, arXiv:2410.14648, 2024.


\bibitem{DKM}
G. Dolinar, B. Kuzma, D. Mitrovi\'c, 
Isometries of probability measures with respect to the total variation distance,
\textit{J. Math. Anal. Appl.} (2021), Article No. 125829.

\bibitem{dolinar-molnar} G. Dolinar, L. Moln\'ar, Isometries of the space of distribution functions with respect to the Kolmogorov--Smirnov metric, \textit{J. Math. Anal. Appl.} 348 (2008), 494--498.


\bibitem{Figalli}
A. Figalli, F. Glaudo,
An Invitation to Optimal Transport, Wasserstein Distances, and Gradient Flows,
\textit{EMS Textbooks in Mathematics}, Volume 23, 2021.

\bibitem{FJ} A. Figalli, N. Juillet, Absolute continuity of Wasserstein geodesics in the Heisenberg group, \textit{J. Funct. Anal.} 255 (2008), 133--141. 

\bibitem{Flynn}S. Flynn, Injectivity of the Heisenberg X-ray Transform, \textit{J. Funct. Anal.} 280 (2021), Article No. 108886.

\bibitem{kuiper} Gy. P. Geh\'er, Surjective Kuiper isometries,  \textit{Houston J. Math.} 44 (2018), 263--281.


\bibitem{geher-titkos} Gy. P. Geh\'er, and T. Titkos, A characterisation of isometries with respect to the L\'evy-Prokhorov
metric, \textit{Ann. Sc. Norm. Super. Pisa Cl. Sci.} Vol. XIX (2019), 655--677.

\bibitem{isemb-jmaa} Gy. P. Geh\'er, T. Titkos, D. Virosztek, On isometric embeddings of Wasserstein spaces -- the discrete case, \textit{J. Math. Anal. Appl.} 480 (2019), Article No. 123435.


\bibitem{GTV1} Gy. P. Geh\'er, T. Titkos, D. Virosztek, Isometric study of Wasserstein spaces-the real line, \textit{Trans. Amer. Math. Soc.} 373 (2020), 5855--5883
			
\bibitem{GTV2} Gy. P. Geh\'er, T. Titkos, D. Virosztek, Isometry group of Wasserstein spaces: the Hilbertian case, \textit{J. London Math. Soc.}  106 (2022), 3865--3894.


\bibitem{TnSn} 
{Gy. P. Geh\'er, T. Titkos, D. Virosztek},
{Isometric rigidity of Wasserstein tori and spheres}, 
\textit{Mathematika} 69 (2023), 20--32.

\bibitem{J} N. Juillet, Geometric inequalities and generalized Ricci bounds in the Heisenberg group, \textit{Int. Math. Res. Not.} 13 (2009), 2347--2373.

\bibitem{J1} N. Juillet, Diffusion by optimal transport in Heisenberg groups, \textit{Calc. Var. PDE} (2014), 693--721. 
   
\bibitem{KT}
G. Kiss, T. Titkos,
{Isometric rigidity of Wasserstein spaces: the graph metric case}, 
\textit{Proc. Amer. Math. Soc.} 150 (2022), 4083--4097.

\bibitem{K} B. Kloeckner,  A geometric study of Wasserstein spaces: Euclidean spaces. \textit{Ann. Sc. Norm. Super. Pisa Cl. Sci.} 5 (2010), 297--323. 

\bibitem{K2} B. Kloeckner, A generalization of Hausdorff dimension applied to Hilbert cubes and Wasserstein spaces, \textit{J. Top. Anal.} Vol. 04 (2012), 203--235.
   
\bibitem{LV}  J. Lott, C. Villani, Ricci curvature for metric measure spaces via optimal transport. \textit{Ann. of Math.}  169 (2009), 903--991.

\bibitem{Magnani}
V. Magnani, Unrectifiability and rigidity in stratified groups. \textit{Arch. Math.} 83 (2004), 568--576.




\bibitem{molnar-levy} L. Moln\'ar, {L\'evy isometries of the space of probability distribution functions,} \textit{J. Math. Anal. Appl.} 380 (2011), 847--852.

\bibitem{PSP15} P. Pegon, D. Piazzoli, and F. Santambrogio, {Full characterization of optimal transport plans for concave costs,} \textit{Discrete and Continuous Dynamical Systems} 35 (2015), 6113--6132

\bibitem{Santambrogio}
{F. Santambrogio}, 
Optimal Transport for Applied Mathematicians, 
\textit{Progress in Nonlinear Differential
Equations and Their Applications} 87, Birkhauser Basel (2015).

\bibitem{S-R}
J. Santos-Rodr\'iguez, 
On isometries of compact $L^p$–Wasserstein spaces, 
\textit{Adv. Math.} 409 (2022), Article No. 108632.


\bibitem{V} C. Villani, Topics in Optimal Transportation, \textit{Graduate Studies in Mathematics.} vol. 58 of, Amer. Math. Soc. Providence, RI, 2003. 		
		

\bibitem{Villani} 
C. Villani, Optimal Transport: Old and New,
\textit{Grundlehren der mathematischen Wissenschaften} 338, Springer-Verlag Berlin Heidelberg, 2009.








\end{thebibliography}
\end{document}